\DeclareMathOperator{\Ass}{Ass}
\DeclareMathOperator{\Br}{Br}
\DeclareMathOperator{\cyc}{cyc}
\DeclareMathOperator{\Ker}{Ker}
\DeclareMathOperator{\Spec}{Spec}
\DeclareMathOperator{\Hom}{Hom}
\DeclareMathOperator{\red}{red}
\DeclareMathOperator{\sh}{hs}
\DeclareMathOperator{\fppf}{fppf}
\DeclareMathOperator{\fpqc}{fpqc}
\DeclareMathOperator{\Pic}{Pic}
\DeclareMathOperator{\ltf}{ltf}
\DeclareMathOperator{\nor}{nor}
\DeclareMathOperator{\prof}{prof}
\DeclareMathOperator{\reg}{reg}
\DeclareMathOperator{\cO}{\mathcal{O}}
\DeclareMathOperator{\cL}{\mathcal{L}}
\DeclareMathOperator{\cV}{\mathcal{V}}
\DeclareMathOperator{\cI}{\mathcal{I}}
\DeclareMathOperator{\cN}{\mathcal{N}}
\DeclareMathOperator{\cP}{\mathcal{P}}
\DeclareMathOperator{\cM}{\mathcal{M}}
\DeclareMathOperator{\bbG}{\mathbb{G}}
\DeclareMathOperator{\bbZ}{\mathbb{Z}}
\DeclareMathOperator{\fm}{\mathfrak{m}}
\DeclareMathOperator{\ra}{\rightarrow}
\DeclareMathOperator{\NS}{NS}
\def\div{\mathrm{div}}
\newtheorem{Th}{Théorème}[section]
\newtheorem{Cor}[Th]{Corollaire}
\newtheorem{Prop}[Th]{Proposition}
\newtheorem{Lem}[Th]{Lemme}
\newtheorem{Rem}[Th]{Remarque}
\newtheorem{Def}[Th]{Définition}
\title{\textbf{Modèles semi-factoriels et modèles de Néron}}
\author{Cédric Pépin}
\date{}
\begin{document}

\maketitle

\begin{abstract}
Soit $S$ le spectre d'un anneau de valuation discrète de corps de fonctions $K$. Soit $X$ un schéma sur $S$. On dira que $X$ est \emph{semi-factoriel sur $S$} si tout faisceau inversible sur la fibre générique $X_{K}$ se prolonge en un faisceau inversible sur $X$. On montre ici que tout schéma propre géométriquement normal sur $K$ possède un modèle propre, plat, normal et semi-factoriel sur $S$. On construit également des compactifications semi-factorielles de $S$-schémas réguliers, tels que les modèles de Néron des variétés abéliennes. 

La propriété de semi-factorialité pour un schéma $X/S$ correspond à la propriété de Néron de son foncteur de Picard. En particulier, on peut retrouver le modèle de Néron de la variété de Picard $\Pic_{X_K/K,\red}^0$ de $X_K$ à partir du foncteur de Picard $\Pic_{X/S}$, comme dans le cas connu des courbes. On en tire des conséquences sur l'équivalence algébrique relative sur le $S$-schéma $X$.
\end{abstract}

\tableofcontents

\section{Introduction} \label{section0}
Soit $S$ un trait, c'est-à-dire le spectre d'un anneau de valuation discrète. On note $K$ son corps de fonctions. 

\begin{Def} \label{tf}
Soit $X\ra S$ un $S$-schéma. On dit que $X$ est \emph{semi-factoriel sur $S$} si l'homomorphisme de restriction $$\Pic(X)\ra\Pic(X_{K})$$
est surjectif.  
\end{Def}
Un $S$-schéma $X/S$ est donc semi-factoriel si pour tout $\cO_{X_{K}}$-module inversible $\cL_{K}$, il existe un $\cO_{X}$-module inversible $\cL$ tel que $\cL\otimes K$ soit isomorphe à $\cL_{K}$. Cette définition est surtout pertinente lorsque le schéma $X$ est \emph{localement noethérien et normal}, parce qu'on dispose alors d'une bonne interprétation des faisceaux inversibles sur $X$ en termes de cycles: le groupe de Picard de $X$ \emph{s'injecte} dans celui des classes de cycles $1$-codimensionnels (\cite{EGA IV}${}_4$ 21.6.10). La terminologie << semi-factoriel >> provient du cas où le schéma $X$ est localement factoriel : il est alors automatiquement semi-factoriel sur $S$ (\emph{loc. cit.}). 

En l'abscence actuelle d'un théorème de désingularisation général, on ne peut pas assurer qu'un schéma propre et lisse sur $K$ possède un modèle popre et plat sur $S$, qui soit un schéma \emph{régulier} (par << modèle >>, on entend << $S$-schéma de fibre générique isomorphe à $X_K$ >>).

Dans la section \ref{section1}, on montre néanmoins que tout schéma $X_K$ qui est propre géométriquement normal sur $K$ possède un modèle $X$ propre, plat, normal et \emph{semi-factoriel} sur $S$. Plus généralement, si $T$ est un trait limite projective filtrante de traits étales sur $S$, on construit des modèles $X/S$ comme ci-dessus qui commutent au changement de base $T\ra S$, i.e. $X\times_S T$ est semi-factoriel sur $T$. Lorsque $T$ est un hensélisé strict $S^{\sh}$ de $S$, la construction fournit des modèles qui commutent à des changements de trait plus généraux que $S^{\sh}\ra S$, que l'on a qualifiés de \emph{permis} (définition \ref{permis} et théorème \ref{universel}). Dans la sous-section \ref{scsf}, on part d'un schéma $X$ de type fini, séparé et plat sur $S$, qui est \emph{régulier}. On construit alors des compactifications normales $\overline{X}$ de $X$ telles que la flèche de restriction $\Pic(\overline{X})\ra\Pic(X)$ soit surjective, et qui sont compatibles aux changements de trait du type précédent (corollaire \ref{vauniv}). Cela s'applique par exemple au cas où $X/S$ est le modèle de Néron d'une variété abélienne (corollaire \ref{mn}). Dans la sous-section \ref{global}, on montre une variante des énoncés de semi-factorialité sur un schéma de Dedekind global.

Pour construire des modèles semi-factoriels, on procède en deux étapes. On considère d'abord un morphisme lisse de type fini $f:Y\ra B$ entre schémas noethériens, et un $\cO_{Y}$-module cohérent $\cM$ qui est inversible au-dessus d'un ouvert schématiquement dense $U\subset B$. En utilisant les techniques de platification de Raynaud-Gruson \cite{GR}, on montre alors qu'après un éclatement bien choisi \emph{de la base $B$} centré en dehors de $U$, le faisceau $\cM|_{f^{-1}(U)}$ possède un prolongement inversible sur $X$ (théorème \ref{prolongement}). On applique ensuite cet énoncé à une situation universelle $f:=p_1:X\times_S\Lambda\ra X$, où $X$ est un modèle propre et plat de $X_K$. L'espace de paramètres $\Lambda$ est construit à partir du schéma de Picard rigidifié de $X_K$ et du modèle de Néron de sa composante neutre réduite. Un effort particulier a été fait pour que la construction fonctionne même en l'abscence de point $K$-rationnel sur $X_K$.

Dans la section \ref{section2}, on considère un $K$-schéma propre géométriquement normal et géométriquement connexe $X_K$, ainsi qu'un modèle propre et plat $X$ de $X_K$ sur $S$, normal, et semi-factoriel sur $S^{\sh}$. Lorsque $\Pic_{X_K/K}(K^{\sh})=\Pic(X_{K^{\sh}})$ (par exemple si $X_K(K^{\sh})\neq\emptyset$ ou si le corps résiduel de $\cO(S)$ est parfait), la semi-factorialité de $X_{S^{\sh}}/S^{\sh}$ signifie précisément que le foncteur $\Pic_{X/S}$ vérifie la propriété de Néron d'extension des points étales. On peut alors retrouver le modèle de Néron $A$ de la variété de Picard $A_K:=\Pic_{X_K/K,\red}^0$ à partir du foncteur $\Pic_{X/S}$ (théorèmes \ref{Néron} et \ref{Néronbis}). En dimension relative $1$, où l'on peut utiliser des modèles \emph{réguliers} de $X_K$, il s'agit du théorème \cite{R} 8.1.4 de Raynaud. Une fois que l'on dispose des modèles semi-factoriels, la méthode se généralise tout de suite en dimension supérieure, à ceci près que le foncteur de Picard n'est plus formellement lisse en général. Il faut donc ajouter une étape de \emph{lissification des groupes}, au sens de \cite{BLR} page 174.

En analysant le lien entre les composantes neutres de $\Pic_{X/S}$ et $A$ dans la situation précédente, on obtient des informations sur l'équivalence algébrique sur le $S$-schéma $X$. Plus précisément, supposons $\cO(S)$ complet à corps résiduel algébriquement clos. Soit $n$ l'exposant du groupe des composantes connexes de la fibre spéciale de $A/S$. Si $\cL_K$ est un faisceau inversible algébriquement équivalent à zéro sur $X_K$, alors $\cL_{K}^{\otimes n}$ peut se prolonger en un faisceau inversible algébriquement équivalent à zéro sur $X$ (corollaire \ref{n}). Cette information sera utilisée dans \cite{P} pour étudier le symbole de Néron sur $X_K$ (cf. \cite{N}), dans un cadre de géométrie relative inconditionnel.
\paragraph{Remerciements.}
Je tiens à remercier Michel Raynaud pour m'avoir confié la construction des modèles semi-factoriels, et pour m'avoir fait découvrir les \emph{\'Eléments de géométrie algébrique}. Je remercie également Qing Liu pour sa relecture attentive et ses remarques utiles. 
 
\section{Construction des modèles semi-factoriels} \label{section1}

\subsection{Un théorème pour les morphismes lisses} \label{tml}

Rappelons (\cite{GR} 5.1.3) que si $U$ est un ouvert d'un schéma localement noethérien $S$, un éclatement \emph{$U$-admissible} de $S$ est un éclatement $S'\ra S$ centré en dehors de $U$. C'est en particulier un isomorphisme au-dessus de $U$. Nous allons travailler avec un ouvert $U$ \emph{schématiquement dense} dans $S$ (\cite{EGA IV}${}_3$ 11.10.2). Lorsque $S$ est localement noethérien, cela signifie que $U$ contient tous les \emph{points associés} à $S$ (\cite{EGA IV}${}_2$ 3.1.8). Si $S'\ra S$ est un éclatement $U$-admissible avec $U$ schématiquement dense dans $S$, l'image réciproque de $U$ dans $S'$ est schématiquement dense dans $S'$ (\cite{GR} 5.1.2 (iii)).

Par ailleurs, étant donné un morphisme de schémas $f:X\ra S$ et un sous-schéma ouvert $U\subseteq S$, on notera $X_U:=X\times_S U$.

\begin{Th} \label{prolongement}
Soient $S$ un schéma noethérien, $U$ un ouvert schématiquement dense dans $S$, $f:X\ra S$ un $S$-schéma lisse de type fini, $\cM$ un $\cO_{X}$-module cohérent. Supposons que le $\cO_{X_U}$-module $\cM|_{X_U}$ soit inversible. Alors il existe un carré cartésien
\begin{displaymath}
\xymatrix{
X\ar[d]^f &\ar[l] X' \ar[d]^{f'} \\
S & \ar[l] S'
}
\end{displaymath}
où $S'\ra S$ est un éclatement $U$-admissible, et un $\cO_{X'}$-module \emph{inversible} $\widetilde{\cM}$ vérifiant la condition suivante : notant avec un $'$ les pullbacks par $S'\ra S$, on a 
$$\widetilde{\cM}|_{X_{U'}'}=\cM'|_{X_{U'}'}.$$
\end{Th}

Commençons par un critère pour qu'un module cohérent sur un $S$-schéma lisse soit inversible.

\begin{Prop} \label{clé}
Soient $S$ un schéma localement noethérien, $U$ un ouvert dense de $S$, $f:X\ra S$ un $S$-schéma \emph{lisse}, $\cM$ un $\cO_X$-module cohérent. On suppose que $\cM$ est inversible sur  $X_U$, $S$-plat, sans composante immergée sur les fibres de $X/S$ (i.e. $\cM\otimes k(s)$ est sans composante immergée pour tout $s\in S$). Alors $\cM$ est inversible.
\end{Prop}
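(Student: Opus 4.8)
La question étant locale sur $X$ et sur $S$, je supposerais $S=\Spec A$ avec $A$ local noethérien de point fermé $s$, et il s'agirait de montrer que $\cM_x$ est libre de rang $1$ pour un point $x\in X$ au-dessus de $s$. Si $s\in U$, alors $x\in X_U$ et c'est clair ; je supposerais donc $s\notin U$. Comme $f$ est lisse, donc ouvert, $X_U=f^{-1}(U)$ est dense dans $X$ ; puisque $\cM$ y est inversible, $\cM$ est de support $X$ et de rang générique $1$, et $\cM_s=\cM\otimes_A k(s)$ est de support $X_s$. Le levier serait le critère de platitude par fibres (\cite{EGA IV}${}_3$ 11.3.10) : $\cM$ et $X$ étant plats sur $S$, le module $\cM$ est $\cO_X$-plat en $x$ — donc localement libre, donc inversible puisque de rang générique $1$ — dès que $\cM_s$ est $\cO_{X_s}$-plat en $x$. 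Je ramènerais ainsi l'énoncé à montrer que \emph{$\cM_s$ est inversible sur $X_s$}, et ce pour tout $s\in S$.

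Pour l'établir, l'idée est de se placer au-dessus d'un trait. Comme $U$ est dense dans $S$, il contient tous les points maximaux de $S$ ; soit donc $\fp$ un idéal premier minimal de $A$ appartenant à $U$. Alors $\fp\neq\fm_s$ (car $s\notin U$), de sorte que $A/\fp$ est un anneau local noethérien de dimension $\geq 1$ ; choisissons un anneau de valuation discrète $R$ muni d'un homomorphisme local injectif $A/\fp\ra R$. On obtient un morphisme $g:T=\Spec R\ra S$ envoyant le point fermé $\overline t$ sur $s$ et le point générique $t_0$ sur $\fp$. Posons $X_T=X\times_S T$ et notons $\cM_T$ l'image inverse de $\cM$ sur $X_T$. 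Alors $X_T\ra T$ est lisse et $\cM_T$ est $R$-plat (changement de base), et les hypothèses se transmettent : la fibre générique de $X_T/T$ est un schéma au-dessus de la fibre de $X/S$ en $\fp$, laquelle est contenue dans $X_U$, d'où $\cM_T$ inversible sur cette fibre générique ; la fibre spéciale de $X_T/T$ est $X_s\otimes_{k(s)}k(\overline t)$, et la restriction de $\cM_T$ à celle-ci est $\cM_s\otimes_{k(s)}k(\overline t)$, encore sans composante immergée (cette propriété, équivalente à $(S_1)$, est préservée par extension du corps de base pour un faisceau cohérent). On est ainsi ramené au cas où $S=T$ est un trait.

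Dans ce cas $X_T$ est \emph{régulier} ($X_T\ra T$ lisse, $R$ régulier). Je vérifierais d'abord que $\cM_T$ est sans composante immergée sur $X_T$ : d'après la description des points associés d'un module plat (\cite{EGA IV}${}_2$ 3.3.1), $\Ass_{X_T}(\cM_T)$ est formé de points situés au-dessus du point générique de $T$, et comme $\cM_T$ est inversible sur la fibre générique, $\Ass_{X_T}(\cM_T)$ est contenu dans l'ensemble des points maximaux de $X_T$ ; étant de support $X_T$, le faisceau $\cM_T$ est donc sans torsion et de rang $1$. Ensuite, la formule de la profondeur dans une famille plate (\cite{EGA IV}${}_2$ 6.3.1), pour $x'\in X_T$ au-dessus de $t'\in T$,
$$\prof_{x'}(\cM_T)=\prof_{t'}(\cO_T)+\prof_{x'}(\cM_T\otimes_{\cO_T}k(t'))$$
jointe à $\prof_{\overline t}(\cO_T)=1$ et à la condition $(S_1)$ sur les fibres, montre que $\cM_T$ vérifie $(S_2)$ sur $X_T$ (au-dessus de $t_0$, $\cM_T$ est inversible donc de profondeur maximale ; au-dessus de $\overline t$, la profondeur est au moins $1+\min(1,\dim-1)=\min(2,\dim)$). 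Ainsi $\cM_T$ est un faisceau cohérent sans torsion, de rang $1$, vérifiant $(S_2)$ sur le schéma régulier $X_T$ : il est réflexif, et un faisceau réflexif de rang $1$ sur un schéma localement factoriel est inversible. En restreignant à la fibre spéciale, puis en descendant le long de l'extension de corps $k(s)\ra k(\overline t)$, on conclut que $\cM_s$ est inversible sur $X_s$, ce qui achève la preuve.

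Le point le plus délicat me paraît être la construction du trait $R$ et la vérification soigneuse que les hypothèses y descendent — en particulier la stabilité de « sans composante immergée » par le changement de corps résiduel $k(s)\ra k(\overline t)$. Le ressort conceptuel de la preuve est la formule de la profondeur : c'est elle qui fait passer de la condition $(S_1)$ imposée aux fibres à la condition $(S_2)$ sur l'espace total dès que l'on travaille au-dessus d'un anneau de valuation discrète, et cette marge supplémentaire, jointe à la régularité de $X_T$, suffit à forcer l'inversibilité de $\cM_T$.
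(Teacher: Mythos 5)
Votre preuve est correcte et suit essentiellement la même démarche que l'article : réduction à l'inversibilité sur les fibres (vous invoquez le critère de platitude par fibres là où l'article explicite l'argument via Nakayama), passage à un trait dominant $\cO_{S,s}$ par une générisation dans $U$, puis réflexivité du module sans torsion de rang $1$ de profondeur suffisante sur le schéma régulier $X_T$, et inversibilité par factorialité locale, avec descente finale le long de $k(s)\ra k(\bar t)$. La seule nuance est que vous affirmez sans justification la préservation de << sans composante immergée >> par extension du corps résiduel (fait exact, via la Cohen--Macaulayité des anneaux $k(\xi)\otimes_{k(s)}k(\bar t)$), alors que l'article contourne ce point en ne demandant la profondeur $\geq 2$ qu'en dehors d'un ouvert $S$-dense $W$ où $\cM$ est inversible, ce qui ne requiert que la compatibilité de $\Ass$ aux images réciproques plates (\cite{EGA IV}${}_2$ 3.3.1).
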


\begin{proof}
Il suffit de montrer que pour tout $s\in S$, le module $\cM\otimes k(s)$ est inversible sur $f^{-1}(s)$. En effet, soient dans ce cas $x\in X$ et $s=f(x)$. Comme $\cM\otimes k(s)$ est inversible, il existe, quitte à remplacer $X$ par un voisinage ouvert de $x$, un homomorphisme $\alpha:\cO_X\ra \cM$ tel que $\alpha_x\otimes k(s)$ soit bijectif. Par Nakayama, le conoyau de $\alpha_x$ est nul. Notant $\cN$ le noyau de $\alpha$, on a donc une suite exacte de $\cO_{X,x}$-modules
$$0\ra \cN_x\ra \cO_{X,x} \xrightarrow{\alpha_x} \cM_x \ra 0.$$
Alors, $\cM_x$ étant $\cO_{S,s}$-plat, on a $\cN_x\otimes k(s)=0$ puis $\cN_x=0$ à nouveau par Nakayama. Ainsi, $\alpha$ est un isomorphisme au voisinage de $x$. 

Prouvons donc que $\cM$ est inversible sur les fibres de $f$. Notons d'abord que $\cM$ est inversible sur un ouvert \emph{$S$-dense} $W$ de $X$, c'est-à-dire dense dans chaque fibre de $f$ : le critère de platitude par fibres (\cite{EGA IV}${}_3$ 11.3.10) et le théorème de platitude générique (\cite{EGA IV}${}_2$ 6.9.1) assurent que $\cM$ est localement libre sur un ouvert $S$-dense de $X$, et alors $\cM$ est inversible sur cet ouvert puisqu'il l'est déjà sur l'ouvert dense $X_U$.

Soient $s\in S-U$. Choisissons une générisation $\eta$ de $s$ dans $U$. D'après \cite{EGA II} 7.1.9, il existe un trait $T$ et un morphisme $T\ra S$ envoyant le point fermé $t$ de $T$ sur $s$ et son point générique sur $\eta$. Le schéma $X_T:=X\times_S T$
est alors régulier. Notons $\cM_T$ le pull-back de $\cM$ sur $X_T$. Il est de profondeur au moins $2$ en tout point $z$ de $X_T-W_T$ : cela résulte de \cite{EGA IV}${}_2$ 6.3.1, puisque le point $z$ n'est pas associé à $\cM_T\otimes k(t)$ et que $\prof(\cO_{T,t})=1$. On en déduit que le module $\cM_T$ sur le schéma régulier $X_T$ est \emph{réflexif} (cf. \cite{S}, \S  3, Corollaires 2 et 3). En particulier, il est sans torsion, et étant de rang $1$, c'est un idéal fractionnaire de $\cO_{X_T}$. Mais un idéal fractionnaire réflexif est divisoriel (\cite{B} Chap. 7, \S  4, n${}^\circ$2, Exemple 2), et par conséquent $\cM_T$ est inversible sur le schéma localement factoriel $X_T$. Sa restriction à la fibre fermée de $X_T/T$ est inversible, donc $\cM\otimes k(s)$ est inversible. 
\end{proof}
Pour réaliser une situation dans laquelle la proposition \ref{clé} s'applique, nous avons besoin d'un procédé d'élimination des composantes immergées (comparer à \cite{F IV} 2.3).

\begin{Th} \label{composantes}
Soient $S$ un schéma noethérien, $U$ un ouvert schématiquement dense dans $S$, $f:X\ra S$ un $S$-schéma de type fini et plat à fibres géométriquement réduites, $\cM$ un $\cO_{X}$-module cohérent. Supposons que $\cM|_{X_U}$ soit localement libre de rang $\geq 1$, et qu'il existe un ouvert $S$-dense $V$ de $X$ tel que $\cM|_V$ soit localement libre.
Alors il existe un carré cartésien
\begin{displaymath}
\xymatrix{
X\ar[d]^f &\ar[l] X' \ar[d]^{f'} \\
S & \ar[l] S'
}
\end{displaymath}
où $S'\ra S$ est un éclatement $U$-admissible, et un $\cO_{X'}$-module cohérent $\widetilde{\cM}$ vérifiant les conditions suivantes : notant avec un $'$ les pullbacks par $S'\ra S$,
\begin{itemize}
\item $\widetilde{\cM}$ coïncide avec $\cM'$ sur $X_{U'}'\cup V'$ ; 
\item $\widetilde{\cM}$ est $S'$-plat ;
\item $\widetilde{\cM}$ est sans composante immergée sur les fibres de $X'/S'$.
\end{itemize}
En particulier, le $\cO_{X'}$-module $\widetilde{\cM}$ est $Z'$-clos pour $Z':= X'-X_{U'}'\cup V'$ (\cite{EGA IV}${}_2$ 5.9).
\end{Th}
La dernière assertion résulte du lemme suivant. 

\begin{Lem} \label{profondeur}
Soient $S$ et $X$ des schémas localement noethériens, $U$ un ouvert schématiquement dense dans $S$,  $f:X\ra S$ un $S$-schéma à fibres réduites, $W$ un ouvert $S$-dense de $X$ contenant $X_U$, $\cM$ un $\cO_{X}$-module cohérent et $S$-plat. Notant $\Ass$ pour << points associés >>, supposons la condition suivante vérifiée : $\Ass(\cM\otimes k(s))$ est contenu dans $\Ass(X_s)$ pour tout $s\in S$ (par exemple, $\cM$ sans composante immergée sur les fibres de $X/S$ et localement libre sur $W$ de rang $\geq 1$). Alors $\cM$ est $Z$-clos pour $Z:=X-W$.
\end{Lem}

\begin{proof}
D'après \cite{EGA IV}${}_2$ 5.10.5, il s'agit de voir que 
$$\prof_{Z}(\cM)\geq 2.$$
Soit $z\in Z$. Le point $f(z)$ n'est pas dans l'ouvert schématiquement dense $U$ de $S$, ce n'est donc pas un point associé à $S$, autrement dit $\prof(\cO_{S,f(z)})\geq 1$. De plus, le point $z$ n'est pas un point générique de $X_s$, et n'est donc pas associé à $\cM\otimes k(s)$ d'après les hypothèses, d'où $\prof((\cM\otimes k(s))_z)\geq 1$. Conclusion, $\prof(\cM_z)\geq 2$ (\cite{EGA IV}${}_2$ 6.3.1).
\end{proof}
Le lemme \ref{profondeur} permet également de ramener la preuve du théorème \ref{composantes} au cas où $X$ est un schéma affine. 

En effet, supposons d'abord qu'il existe un recouvrement ouvert $X=X_1\cup X_2$ tel que le théorème soit vrai pour les morphismes induits $f_i:X_i\ra S$ par $f$ et les modules $\cM|_{X_i}$, pour $i=1,2$. Il existe alors des éclatements $U$-admissibles $\varphi_i:S_{i}'\ra S$ d'idéaux cohérents $\cI_i$ de $\cO_S$ et des modules $\widetilde{\cM}_i$ qui sont solutions du problème pour les $f_i$. Soit $\varphi:S''\ra S$ l'éclatement $U$-admissible du produit $\cI_1\cdot\cI_2$ dans $S$. D'après \cite{GR} 5.1.2 (v), on a $\varphi=\gamma_i\circ\varphi_i$ où $\gamma_i:S''\ra S_{i}'$ est l'éclatement $(\varphi_i)^{-1}(U)$-admissible de $(\prod_{j\neq i}\cI_j)\cO_{S_{i}'}$ dans $S_{i}'$. D'où des carrés cartésiens
\begin{displaymath}
\xymatrix{
X_{i} \ar[d]^{f_i} & X_{i}' \ar[l] \ar[d]^{f_{i}'} & X_{i}'' \ar[l]_{\Gamma_i} \ar[d]^{f_{i}''}\\
S        & S_{i}' \ar[l]_{\varphi_i}        & S''. \ar[l]_{\gamma_i} \ar@/^1pc/[ll]^{\varphi}
}
\end{displaymath}
Le pull-back $f'':X''\ra S''$ de $f:X\ra S$ par $\varphi$ est alors le recollement de $f_{1}''$ et $f_{2}''$. On pose
$$\widetilde{\cM}|_{X_{i}''}:=(\Gamma_{i})^{*}\widetilde{\cM}_i.$$
Ce $\cO_{X_{i}''}$-module coïncide avec le $\cO_{X''}$-module $\cM''$ sur 
$$(X_{U''}''\cup V'')\cap X_{i}''.$$
En particulier $\widetilde{\cM}|_{X_{i}''}$ est localement libre sur cet ouvert, de rang $\geq 1$. Mais comme il est sans composante immergée sur les fibres de $f_{i}''$, il résulte du lemme \ref{profondeur} qu'il est $Z''\cap X_{i}''$-clos pour $Z''=X''-X_{U''}''\cup V''$. Par conséquent, les faisceaux $\widetilde{\cM}|_{X_{i}''}$ pour $i=1,2$ se recollent en un $\cO_{X''}$-module $\widetilde{\cM}$, qui est solution du problème pour le module $\cM$.

Par récurrence, on en déduit que si $X=\cup_{i=1}^n X_i$ pour un certain $n\geq 1$ de sorte que le théorème soit vrai pour les $X_i/S$ et les $\cM|_{X_i}$, alors le théorème est vrai pour $X/S$ et $\cM$. Comme le schéma $X$ est quasi-compact, on est ainsi ramené à la

\begin{proof}[Démonstration du théorème \ref{composantes} lorsque $X$ est affine.]
Soit $\cM^*$ le $\cO_{X}$-module dual de $\cM$. C'est un module cohérent sur le schéma affine $X$. Il existe donc un entier $r\geq 1$ et un morphisme $\cO_X$-linéaire surjectif $u:\cO_{X}^r\twoheadrightarrow \cM^*$. En composant le morphisme injectif dual $\cM^{**}\hookrightarrow\cO_{X}^r$ avec le morphisme canonique $c:\cM\ra\cM^{**}$, on obtient un morphisme $v:\cM\ra\cO_{X}^r$.

Par hypothèse, le module $\cM$  est localement libre au-dessus de $U$. Il en va donc de même pour le noyau de $u$, puis pour le conoyau $\cN$ de $v$. En particulier, le module $\cN|_{X_U}$ est $U$-plat.

On peut donc appliquer le théorème 5.2.2 de \cite{GR}, pour obtenir un éclatement $U$-admissible $S'\ra S$ tel que le transformé strict $\overline{\cN'}$ de $\cN$ sur $X'$ soit \emph{$S'$-plat}. Le $\cO_{X'}$-module $\overline{\cN'}$ est quotient du pull-back $\cN'$ de $\cN$. En particulier, il est cohérent et le morphisme surjectif $\cO_{X'}^{r}\twoheadrightarrow\cN'$ induit un morphisme surjectif $\cO_{X'}^{r}\twoheadrightarrow\overline{\cN'}$. Soit $\widetilde{\cM}$ son noyau, cohérent et $S'$-plat. Pour tout $s'\in S'$, on a une suite exacte
$$0\ra \widetilde{\cM}\otimes k(s') \ra \cO_{X'_{s'}}^r \ra \overline{\cN'}\otimes k(s')\ra 0.$$ Le $S'$-schéma $X'\ra S'$ étant à fibres réduites, on en déduit que $\widetilde{\cM}$ est sans composante immergée sur les fibres de $X'/S'$.

Montrons enfin que $\cM'=\widetilde{\cM}$ sur l'ouvert $X_{U'}'\cup V'$. Comme par hypothèse $\cM$ est localement libre non seulement sur $X_U$, mais aussi sur la réunion $X_U\cup V$, le conoyau $\cN$ de $v$ est localement libre sur celle-ci. Le morphisme $v':\cM'\ra\cO_{X'}^r$ est donc injectif sur $X_{U'}'\cup V'$. De plus, par définition du transformé strict, le pull-back $\cN'$ de $\cN$ coïncide avec $\overline{\cN'}$ sur cet ouvert (noter que l'ouvert $U'$ de $S'$ est schématiquement dense, comme rappelé plus haut). D'où $\cM'=\widetilde{\cM}$ sur  $X_{U'}'\cup V'$ par définition de $\widetilde{\cM}$. 
\end{proof}

\begin{proof}[Démonstration du théorème \ref{prolongement}.]
Grâce au théorème de platification \cite{GR} 5.2.2, on peut supposer $\cM$ plat sur $S$, et donc inversible sur un ouvert $S$-dense de $X/S$ (\emph{loc. cit} 2.1). Il suffit alors d'appliquer \ref{composantes} puis \ref{clé}. 
\end{proof}

\begin{Rem}
\emph{Soit $\pi:X\ra S$ un morphisme régulier de schémas noethériens, muni d'une section $\sigma$. Si $S$ est excellent et réduit, et si $X$ est le seul voisinage de $\sigma(S)$ dans $X$, alors Boutot démontre un énoncé analogue à celui du théorème \ref{prolongement}, utile pour son étude du schéma de Picard local (\cite{Bout} V 2.4).}
\end{Rem}

\subsection{La situation universelle : énoncé du théorème et esquisse de la preuve} \label{suep}

Si $X$ est un schéma plat sur un trait $S$ de corps de fonctions $K$, les points associés à $X$ sont les mêmes que les points associés à la fibre générique $X_K$ (\cite{EGA IV}${}_2$ 3.3.1). En particulier, si le schéma $X$ est localement noethérien, il est réduit (resp. intègre) si et seulement si $X_K$ l'est (\emph{loc. cit.} 3.2.1). 

Par ailleurs, si $S$ est un trait de corps de fonctions $K$, tout schéma propre sur $K$ possède un modèle propre et plat sur $S$, d'après la version relative du théorème de compactification de Nagata (cf. Deligne \cite{D}, Conrad \cite{C} et Lütkebohmert \cite{Lü}). L'existence de modèles normaux semi-factoriels sur $S$ est donc conséquence de l'énoncé suivant.

\begin{Th} \label{T}
Soient $S^{\sh}\ra T\ra S$ des extensions fidèlement plates de traits, telles que la composée $S^{\sh}\ra S$ soit une hensélisation stricte de $S$. Soit $X_K$ un $K$-schéma propre géométriquement normal. 

Pour tout modèle propre et plat $X/S$ de $X_K$, il existe un éclatement $X'\ra X$ centré dans la fibre fermée de $X/S$, tel que $(X')_T$ soit semi-factoriel sur $T$. Le normalisé $\widetilde{X}$ de $X'$ est un modèle propre et plat de $X_K$ sur $S$, \emph{tel que $(\widetilde{X})_T$ soit normal et semi-factoriel sur $T$.}
\end{Th}

\begin{Rem} \label{Remreg}
\emph{On peut demander en outre que le centre de l'éclatement $X'\ra X$ ne rencontre pas le lieu régulier de $X$ (cf. la preuve section \ref{sup}). Ce raffinement nous sera utile section \ref{scsf}.}
\end{Rem}

\begin{Rem} \label{cdn}
\emph{Il se peut qu'il n'y ait << pas beaucoup >> de faisceaux inversibles sur $X_K$. Par exemple, si $K$ est un corps de nombres, le théorème de Mordell-Weil pour les variétés abéliennes (et la finitude du groupe de Néron-Séveri de $X_K$) montrent que le groupe $\Pic(X_K)$ est de type fini. Par conséquent, si on prend pour $S$ le spectre du localisé de l'anneau des entiers de $K$ en un idéal maximal, alors pour que la flèche $\Pic(X)\ra\Pic(X_K)$ soit surjective, il suffit que son image contienne un certain ensemble \emph{fini}. Au contraire, considérons un trait strictement hensélien $S$. Supposons de plus $X_K$ géométriquement connexe, de sorte que $\Pic(X_K)=\Pic_{X_K/K}(K)$ (cf. lemme \ref{extension}). Alors, dès que la variété de Picard $A_K$ de $X_K$ est non triviale, le groupe $\Pic(X_{K})$ n'est \emph{pas} de type fini. En effet, la fibre spéciale du modèle de Néron $A$ de $A_K$ sur $S$ est un groupe algébrique lisse de dimension non nulle sur le corps résiduel $k$ de $\cO(S)$, qui est séparablement clos. En particulier, le groupe $A(k)$ n'est pas de type fini, bien qu'il soit quotient de $A_{K}(K)$. Il est donc intéressant de disposer de modèles semi-factoriels qui \emph{commutent} à une extension de traits telle que l'hensélisation stricte.}
\end{Rem}

Le principe de la démonstration du théorème \ref{T}, disons dans le cas $S=T$, est le suivant. Supposons pour fixer les idées que l'on dispose d'un $K$-schéma lisse de type fini $\Lambda_K$ et d'un faisceau inversible $\cM_K$ sur $X_K\times_K\Lambda_K$, tel que pour tout faisceau inversible $\cL_{K}$ sur $X_{K}$, il existe $\lambda_{K}\in\Lambda_K(K)$ vérifiant $\cL_{K}\simeq(1\times_K\lambda_{K})^*\cM_K$. Supposons en outre que $\Lambda_K$ admette un modèle $\Lambda/S$ tel que
\begin{enumerate}[a)]
\item $\Lambda\ra S$ est lisse de type fini ;
\item tout $K$-point de $\Lambda_K$ se prolonge en une $S$-section de $\Lambda$.
\end{enumerate}
Prolongeons alors $\cM_K$ en un module cohérent $\cM$ sur $X\times_S\Lambda$. Grâce au point a), on peut appliquer le théorème \ref{prolongement} au morphisme $p_1:X\times_S\Lambda\ra X$, avec l'ouvert schématiquement dense $X_K$ de $X$ et le module $\cM$. On obtient alors un carré cartésien
\begin{displaymath}
\xymatrix{
X\times_S \Lambda\ar[d]^{p_1} &\ar[l] X'\times_S \Lambda \ar[d]^{p_1} \\
X & \ar[l] X'
}
\end{displaymath}
où $X'\ra X$ est un éclatement centré dans la fibre fermée de $X/S$, et un module \emph{inversible} $\widetilde{\cM}$ sur $X'\times_S\Lambda$ qui prolonge $\cM_K$. Si maintenant $\lambda_{K}$ paramètre un faisceau inversible $\cL_{K}$ sur $X_{K}$, tout $\lambda\in \Lambda(S)$ prolongeant $\lambda_{K}$ (point b)) fournit un prolongement inversible de $\cL_{K}$ sur $X'$, à savoir $(1\times_S\lambda)^*\widetilde{\cM}$. Le $S$-modèle $X'$ de $X_K$ est donc semi-factoriel. \emph{A fortiori}, son normalisé $(X')^{\nor}$ est semi-factoriel sur $S$, et on verra (lemme \ref{norm}) qu'il est propre sur $S$.

Maintenant, les faisceaux inversibles sur $X_{K}$ ne forment pas une famille \emph{limitée}, au sens où ils ne sont pas paramétrés par un $K$-schéma \emph{de type fini} : le $K$-schéma $\Pic_{X_K/K}$ est seulement \emph{localement} de type fini. Cependant, comme mentionné dans la remarque ci-dessus, on peut utiliser le théorème de finitude du groupe de Néron-Séveri géométrique de $X_K$. 

Une autre difficulté provient du fait qu'en l'absence de point rationnel sur $X_K$, \emph{il n'y a pas en général de faisceau inversible universel sur $X_K\times_K\Pic_{X_K/K}$}. Pour contourner ce problème, il faut remplacer le foncteur de Picard de $X_K/K$ par un foncteur de Picard \emph{rigidifié} (cf. \cite{R} \no 2). On récupère alors un faisceau inversible universel (rigidifié) avec lequel travailler. Cependant, en contrepartie, on perd en finitude : la variété de Picard rigidifiée de $X_K$ n'est plus une variété abélienne, et par conséquent ne possède pas en général de modèle de Néron (de type fini) sur $S$, qui aurait été un bon atout pour construire $\Lambda$. Néanmoins, en choisissant bien le rigidificateur de $X_K$, on peut assurer que la variété de Picard rigidifiée correspondante soit \emph{semi-abélienne}, et en particulier possède un modèle de Néron au sens suivant.

\begin{Def} \label{defmn}
Soient $S$ un trait de corps de fonctions $K$ et $N_K$ un $K$-schéma lisse et séparé. Un modéle de Néron de $N_K$ sur $S$ est un $S$-schéma lisse et séparé de fibre générique $N_K$, vérifiant la propriété de Néron d'extension des morphismes :

pour tout $S$-schéma \emph{lisse} $Y$, la flèche de restriction
$$\Hom_S(Y,N)\ra\Hom_K(Y_K,N_K)$$
est bijective. 
\end{Def}
Le schéma $N$ ainsi défini est un modèle de Néron $\ltf$ de $N_K$ sur $S$ au sens de \cite{BLR} 10.1/1 : il est localement de type fini sur $S$ (car lisse sur $S$), mais il n'est pas de type fini sur $S$ en général. Voir \emph{loc. cit.} 10.2/2 pour des conditions nécessaires et suffisantes d'existence d'un tel $N$ lorsque $N_K$ est un $K$-schéma \emph{en groupes}.

En utilisant que le groupe abstrait des composantes géométriquement connexes de la fibre spéciale du modèle de Néron d'une variété semi-abélienne est \emph{de type fini}, on parvient finalement à construire un espace de paramètres $\Lambda/S$ convenable.

\subsection{La situation universelle : la preuve} \label{sup}

Commençons par un lemme général.

\begin{Lem}
Soit $X_K$ un schéma réduit propre sur un corps $K$. Soit $x_K$ un sous-schéma fermé de $X_K$, qui rencontre chaque composante connexe de $X_K$. Alors $x_K$ est un \emph{rigidificateur de $P_K$} au sens \cite{R} 2.1.1.
\end{Lem}
  
\begin{proof}
En effet, notons $\{X_{K,c}|\in C\}$ l'ensemble des composantes connexes de $X_K$, et $x_{K,c}$ la restriction de $x_K$ à $X_{K,c}$. Considérons le pull-back
$$\Gamma(X_K)=\prod_{c\in C}\Gamma(X_{K,c})\ra\Gamma(x_K)=\prod_{c\in C}\Gamma(x_{K,c}).$$
Comme $X_K$ est réduit et les $x_{K,c}$ non vides, les morphismes d'anneaux 
$$\Gamma(X_{K,c})\ra\Gamma(x_{K,c})$$ 
ont pour source un corps et pour but un anneau non nul, donc sont injectifs. Le lemme résulte alors de \cite{R} 2.2.2.
\end{proof}  

A partir de maintenant et jusqu'à la fin de cette sous-section \ref{sup}, on se place sous les hypothèses du théorème \ref{T}. On note $L/K$ l'extension générique de $T/K$, et on pose $P_K:=\Pic_{X_K/K}$.

\paragraph{1) Paramétrage sur $K$ des faisceaux inversibles algébriquement équivalents à zéro sur $X_L$.} Fixons un sous-schéma fermé $x_K\hookrightarrow X_K$, dont la restriction à chaque composante connexe de $X_K$ est un point fermé. Le foncteur de Picard rigidifié $(P_K,x_K)$ est représentable par un $K$-schéma en groupes localement de type fini (\cite{R} 2.3.1). Le morphisme d'oubli 
$$(P_K,x_K) \ra P_K$$
est couvrant pour la toplogie étale (\emph{loc. cit.} 2.1.2 b)). De plus, comme le lieu lisse de $X_K$ est ici un ouvert dense de $X_K$, on peut choisir $x_K$ \emph{étale sur $K$}. Dans ce cas, le noyau du morphisme d'oubli est un \emph{tore} $M_K$ (\emph{loc. cit.} 2.4.3 b)). On a ainsi une suite exacte (de faisceaux abéliens pour la topologie étale) 
$$0\ra M_K \ra (P_K ,x_K)^0 \ra P_{K}^0 \ra 0,$$
où ${}^0$ désigne les composantes neutres. L'hypothèse que $X_K$ est géométriquement normal assure que le $K$-schéma $P_{K}^0$ est propre (\cite{G} 2.1 (ii)). En particulier, les $K$-schémas en groupes $P_{K}^0$ et $(P_K,x_K)^0$ sont << sans composante additive >>. Il résulte donc de \emph{loc. cit.} 3.1 que leurs réduits $P_{K,\red}^0$ et $(P_K ,x_K)_{\red}^0$ sont des $K$-schémas \emph{en groupes} (commutatifs), lisses et connexes. On en déduit en particulier un homomorphisme 
$$(P_K ,x_K)_{\red}^0 \ra P_{K,\red}^0$$ 
de noyau $M_K$, et $(P_K ,x_K)_{\red}^0$ est ainsi extension d'une variété abélienne par un tore. D'après \cite{BLR} 10.2/2, le $K$-schéma en groupes $(P_K ,x_K)_{\red}^0$ possède donc un modèle de Néron $N_0$ sur $S$ (définition \ref{defmn}). 

Soit $\Phi_{N_0}$ le $k$-schéma en groupes étale des composantes connexes de la fibre spéciale de $N_0$. Notant $l$ le corps résiduel au point fermé de $T$, l'image de la spécialisation
$$N_0(T)\ra \Phi_{N_0}(l)$$
est un sous-groupe de $\Phi_{N_0}(l)$. Il est donc de type fini (cf. par exemple \cite{BX} 4.11 (i)), et on peut en choisir un système fini de générateurs $\Gamma$. On en fixe un relèvement $\{\lambda_{L,0}^{\gamma}\in(P_K ,x_K)_{\red}^0(L)\ |\ \gamma\in\Gamma\}$. Puis pour tout $\gamma\in\Gamma$, on note $N_{0}^\gamma$ la réunion de la fibre générique de $N_0$ et de la composante connexe de la fibre spéciale de $N_0$ dans laquelle se spécialise $\lambda_{L,0}^\gamma$. On note aussi $N_{0}^0$ la composante neutre de $N_0$. La réunion 
$$\Lambda_0:=N_{0}^0\bigcup_{\gamma\in\Gamma}N_{0}^\gamma$$
est alors un ouvert de $N_0$, de fibre générique $(P_K ,x_K)_{\red}^0$.

Maintenant, l'objet universel pour $(P_K,x_K)$ est représenté par un couple $(\cP_K,t)$, formé d'un faisceau inversible $\cP_K$ sur $X_K\times_K (P_K,x_K)$ et d'une trivialisation $t$ de $\cP_K$ le long du rigidificateur $x_K\times_K (P_K,x_K)$. D'autre part, comme le schéma $X$ est de type fini sur un trait, son lieu de régularité $X^{\reg}$ est un ouvert de $X$. Et $\Lambda_0$ étant lisse sur $S$, l'ouvert $X^{\reg}\times_S\Lambda_0$ de $X\times_S\Lambda_0$ est régulier. On peut donc prolonger le faisceau inversible  
$$\cP_K|_{X_K\times_K(P_K ,x_K)_{\red}^0}$$ 
en un faisceau inversible sur $X_K\times_K\Lambda_K\cup X^{\reg}\times_S\Lambda_0$. Puis on peut prolonger le faisceau obtenu en un module \emph{cohérent} $\cM_0$ sur $X\times_S\Lambda_0$ (\cite{EGA I} 9.4.8).

\paragraph{2) Paramétrage sur $K$ du groupe de Néron-Severi de $X_L$.}
Soit $\overline{K}$ une clôture algébrique de $K$. Le groupe de Néron-Severi géométrique de $X_K$ $$\NS_{g}:=P_K(\overline{K})/P_{K}^0(\overline{K})$$
est de type fini (\cite{SGA6} XIII 5.1). Comme $M_K$ est géométriquement connexe, ce groupe coïncide avec le << groupe de Néron-Severi géométrique rigidifié >> de $X_K$ :
$$\NS_{g,r}:=(P_K,x_K)(\overline{K})/(P_K,x_K)^0(\overline{K}).$$
En particulier, le << groupe de Néron-Severi rigidifié >> de $X_L$
$$\NS_{r,L}:=(P_K,x_K)(L)/(P_K,x_K)^0(L)$$
est de type fini. Il existe donc un ensemble fini d'indices $I$ et des points $\{\lambda_{L,i}\in (P_K,x_K)(L)\ |\ i\in I\}$ tels que $\NS_{r,L}$ soit engendré par les classes des $\lambda_{L,i}$. 

Par hypothèse, l'extension $T/S$ est une sous-extension d'une hensélisation stricte $S^{\sh}/S$. Le trait $T$ est donc limite projective filtrante de traits $S_{\alpha}$ étales sur $S$. Notant $K_{\alpha}$ le corps de fonctions de $S_{\alpha}$, le corps $L$ est alors limite inductive filtrante des $K_{\alpha}$. Il existe donc $\alpha$ tel que pour tout $i\in I$, l'élément $\lambda_{L,i}\in (P_K,x_K)(L)$ provienne d'un élément $\lambda_{K_{\alpha},i}\in (P_K,x_K)(K_{\alpha})$. Pour $i\in I$, posons
$$\cL_{K_\alpha,i}:=(1\times_K\lambda_{K_{\alpha},i})^*\cP_K,$$
faisceau inversible sur $X_K\times_K K_\alpha$. Prolongeons le en un faisceau inversible sur $X_K\times_K K_{\alpha}\cup X^{\reg}\times_S S_{\alpha}$. Puis choisissons un prolongement cohérent $\cM_{\alpha,i}$ de celui-ci  sur $X\times_S S_{\alpha}$. 

\paragraph{3) Modification de $X$ en dehors de $X_K$.}
Considérons l'union disjointe de $\Lambda_0$ et de copies $S_{\alpha,i}$ de $S_{\alpha}$ indexées par $I$ :
$$\Lambda:=\Lambda_0\coprod_{i\in I}S_{\alpha,i}.$$
C'est un $S$-schéma lisse de type fini. La première projection $p_1:X\times_S \Lambda\ra X$ est donc un morphisme lisse de type fini. De plus, on a construit un module cohérent $\cM$ sur $X\times_S \Lambda$, égal à $\cM_0$ sur $X\times_S \Lambda_0$ et à $\cM_{\alpha,i}$ sur $X\times_S S_{\alpha,i}$. Il est inversible au-dessus de $X_K\cup X^{\reg}$. 

Appliquons le théorème \ref{prolongement} au morphisme $p_1$ et au module $\cM$, avec l'ouvert schématiquement dense $X_K\cup X^{\reg}$ de $X$. On trouve un carré cartésien
\begin{displaymath}
\xymatrix{
X\times_S \Lambda\ar[d]^{p_1} &\ar[l] X'\times_S \Lambda \ar[d]^{p_1} \\
X & \ar[l] X'
}
\end{displaymath}
où $X'\ra X$ est un éclatement $(X_K\cup X^{\reg})$-admissible, et un $\cO_{X'\times_S \Lambda}$-module \emph{inversible} $\widetilde{\cM}$ qui prolonge $\cM_K$. Le modèle $X'/S$ de $X_K$ ainsi obtenu est encore propre, et plat puisque $X_K$ est schématiquement dense dans $X'$. 

\paragraph{4) Le schéma $(X')_T$ est semi-factoriel sur $T$.}
 
Soit $\cL_L$ un faisceau inversible sur $X_L$. Il s'agit de voir qu'il existe un faisceau inversible  $\cL_T$ sur $X'\times_S T$ tel que $\cL_T\otimes L$ soit isomorphe à $\cL_L$. 

On peut toujours rigidifier $\cL_L$ le long du schéma artinien $x_K\otimes_K L$ pour obtenir un point $\lambda_L\in (P_K,x_K)(L)$. D'après l'étape \textbf{2)}, il existe $\{n_i\in\bbZ\ |\ i\in I\}$ et $\lambda_{L,0}\in (P_K,x_K)^0(L)$ tels que
$$\lambda_{L}=\sum_{i\in I}n_i\cdot\lambda_{L,i}+\lambda_{L,0}.$$
Or $(P_K,x_K)^0(L)=(P_K,x_K)_{\red}^0(L)$. D'après l'étape \textbf{1)}, il existe donc $\{m_{\gamma}\in\bbZ\ |\ \gamma\in\Gamma\}$ et $\lambda_{L,0}^0\in (P_K,x_K)_{\red}^0(L)$ se prolongeant dans $N_{0}^0(T)$ tels que
$$\lambda_{L,0}=\sum_{\gamma\in \Gamma_l}m_{\gamma}\cdot\lambda_{L,0}^{\gamma}+\lambda_{L,0}^0.$$
Utilisons maintenant l'étape \textbf{3)}. Par définition de $\Lambda_0\subseteq\Lambda$, il existe pour tout $\gamma\in\Gamma$ un élément $\lambda_{0}^{\gamma}\in\Lambda(T)$ prolongeant $\lambda_{L,0}^{\gamma}$ et un élément $\lambda_{0}^{0}\in\Lambda(T)$ prolongeant $\lambda_{L,0}^{0}$. Le faisceau inversible
$$\cL_{T,0}:=\otimes_{\gamma\in \Gamma}\big((1\times_S \lambda_{0}^\gamma)^*\widetilde{\cM}\big)^{\otimes m_{\gamma}}\otimes(1\times_S\lambda_{0}^0)^*\widetilde{\cM}$$
prolonge $(1\times_K\lambda_{L,0})^*\cP_K$ sur $X'\times_S T$.
Par définition de $\Lambda$, il existe pour tout $i\in I$ un élément $\lambda_i\in\Lambda(T)$ tel que 
$(1\times_S\lambda_i)^*\widetilde{\cM}$ prolonge $(1\times_K\lambda_{L,i})^*\cP_K$ sur $X'\times_S T$. Le faisceau inversible
$$\cL_{T}:=\otimes_{i\in I}\big((1\times_S\lambda_i)^*\widetilde{\cM}\big)^{\otimes n_i}\otimes\cL_{T,0}$$
prolonge $(1\times_K\lambda_{L})^*\cP_K\simeq\cL_L$ sur $X'\times_S T$.

\paragraph{5) Normalisation de $X'$.}
Pour achever la preuve du théorème \ref{T}, il ne reste plus qu'à montrer que le morphisme de normalisation  $\widetilde{X}:=(X')^{\nor}\ra X'$ est fini.

\begin{Lem}\label{norm}
Soit $X$ un schéma de type fini et plat sur un trait $S$. Supposons la fibre générique de $X/S$ géométriquement normale (en particulier $X$ est réduit). Alors le morphisme de normalisation $X^{\nor}\ra X$ est fini.
\end{Lem}

\begin{proof}
Soit $\widehat{S}$ le spectre du complété de l'anneau local $\cO(S)$ pour la topologie $\fm$-adique.
Comme c'est un trait excellent, le morphisme de normalisation $(X_{\widehat{S}})^{\nor}\ra X_{\widehat{S}}$ est fini. Par descente $\fpqc$ pour les morphismes finis, il suffit donc de montrer que le carré commutatif 
\begin{displaymath}
\xymatrix{
(X_{\widehat{S}})^{\nor}\ar[r]\ar[d] & X_{\widehat{S}} \ar[d]\\
X^{\nor}\ar[r] & X
}
\end{displaymath}
est cartésien. Pour cela, on peut supposer $X$ intègre et affine.

Notons $X_K$ (resp. $X_{\widehat{K}}$) la fibre générique de $X/S$ (resp. $X_{\widehat{S}}/\widehat{S}$). Ces fibres sont normales par hypothèse. La fermeture intégrale $X^{\nor}$ de $X$ (resp. $(X_{\widehat{S}})^{\nor}$ de $X_{\widehat{S}}$) relativement à son algèbre des fonctions rationnelles coïncide donc avec sa fermeture intégrale relativement à $\cO_{X_K}$ (resp. $\cO_{X_{\widehat{K}}}$). De plus, notant $k$ le corps résiduel de $S$ au pont fermé, la projection $X_{\widehat{S}}\ra X$ induit un isomorphisme 
$$X_k\times_S\widehat{S}\ra X_k.$$
Comme de plus $X_K$ est quasi-compact, on peut appliquer le résultat de descente de \cite{FR} 4.3, pour obtenir que le diagramme ci-dessus est cartésien. 
\end{proof}

\subsection{Changement de trait} \label{changement}

Du théorème \ref{T}, et du fait que le composé de deux éclatements admissibles (d'un schéma quasi-compact et quasi-séparé) soit encore un éclatement admissible (\cite{GR} 5.1.4), on déduit le 

\begin{Cor} \label{Ts}
Soit $S^{\sh}\ra S$ une hensélisation stricte d'un trait $S$. Soient $n$ un entier $\geq 1$, et $T_i/S$, $i=1,\ldots,n$, des sous-extensions de traits fidèlement plates de $S^{\sh}/S$. Soit $X_K$ un $K$-schéma propre géométriquement normal.

Pour tout modèle propre et plat $X/S$ de $X_K$, il existe un morphisme $\widetilde{X}\ra X$, composé d'un éclatement centré dans la fibre fermée de $X/S$ suivi de sa normalisation, tel que $\widetilde{X}/S$ soit un modèle propre et plat de $X_K$ sur $S$, \emph{normal et semi-factoriel après chacun des changements de trait $T_i\ra S$.}
\end{Cor}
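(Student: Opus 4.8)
La stratégie sera d'appliquer le théorème \ref{T} successivement à chacun des traits $T_1,\ldots,T_n$, puis de composer les éclatements obtenus grâce à \cite{GR} 5.1.4. L'ingrédient qui fait tourner la récurrence est l'observation suivante, qui renforce le << \emph{a fortiori}  >> de la sous-section \ref{suep} : si $T$ est un trait de corps de fonctions $L$ et $g\colon Y'\ra Y$ un morphisme de $T$-schémas qui est un isomorphisme au-dessus de la fibre générique $Y_L$, alors $Y/T$ semi-factoriel entraîne $Y'/T$ semi-factoriel, car la restriction $\Pic(Y)\ra\Pic(Y_L)$ se factorise en $\Pic(Y)\xrightarrow{g^{*}}\Pic(Y')\ra\Pic(Y'_L)=\Pic(Y_L)$, de sorte que sa surjectivité entraîne celle de $\Pic(Y')\ra\Pic(Y'_L)$.

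Concrètement : je poserais $X^{(0)}:=X$ et construirais par récurrence des modèles propres et plats $X^{(i)}/S$ de $X_K$, où $X^{(i)}\ra X^{(i-1)}$ est l'éclatement centré dans la fibre fermée fourni par le théorème \ref{T} appliqué au modèle $X^{(i-1)}/S$ et à la tour $S^{\sh}\ra T_i\ra S$ (licite, car $T_i$ est une sous-extension de $S^{\sh}/S$, donc cette tour vérifie les hypothèses de \ref{T}). Comme $X^{(i-1)}/S$ est propre et que $X_K$ y est schématiquement dense, $X^{(i)}/S$ est encore un modèle propre et plat de $X_K$. Ce théorème garantit que $(X^{(i)})_{T_i}$ est semi-factoriel sur $T_i$ ; et comme, pour $j<i$, le morphisme $(X^{(i)})_{T_j}\ra(X^{(i-1)})_{T_j}$ déduit par le changement de base plat $T_j\ra S$ est un isomorphisme au-dessus de la fibre générique $X_{L_j}$, où $L_j:=\Frac\cO(T_j)$ (car $X^{(i)}\ra X^{(i-1)}$ l'est au-dessus de $X_K$), l'observation préliminaire appliquée sur $T_j$ montre que la semi-factorialité sur $T_j$, acquise à l'étape $j$, se propage. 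Au bout de $n$ étapes, $X':=X^{(n)}$ serait un modèle propre et plat de $X_K$ sur $S$, semi-factoriel après chacun des changements $T_i\ra S$, $1\leq i\leq n$.

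Il resterait deux points de finition. D'abord, chaque $X^{(i)}\ra X^{(i-1)}$ étant un éclatement $X_K$-admissible (son centre, contenu dans la fibre fermée, est disjoint de l'ouvert schématiquement dense $X_K$) d'un schéma noethérien, donc quasi-compact et quasi-séparé, \cite{GR} 5.1.4 donnerait que la composée $X'\ra X$ est elle-même un éclatement $X_K$-admissible, c'est-à-dire un éclatement de $X$ centré dans la fibre fermée de $X/S$ ; c'est la forme requise. Ensuite, je normaliserais : posant $\widetilde{X}:=(X')^{\nor}$, le lemme \ref{norm} assure que $\widetilde{X}\ra X'$ est fini, donc $\widetilde{X}/S$ est un modèle propre et plat de $X_K$ ; la normalisation étant un isomorphisme au-dessus du lieu normal, donc au-dessus de $X_K$, le morphisme $(\widetilde{X})_{T_i}\ra(X')_{T_i}$ est un isomorphisme au-dessus de $X_{L_i}$, et l'observation préliminaire donne la semi-factorialité de $(\widetilde{X})_{T_i}$ sur $T_i$ ; enfin $(\widetilde{X})_{T_i}$ est normal, par le même argument que dans le théorème \ref{T} : $\widetilde{X}$ l'est et, $T_i$ étant intercalé entre $S$ et son hensélisé strict, le changement de base $\Spec\cO(T_i)\ra\Spec\cO(S)$ est régulier, donc préserve la normalité.

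Je ne m'attends pas à un obstacle véritablement nouveau : tout le travail est déjà dans le théorème \ref{T} et dans \cite{GR} 5.1.4. La seule subtilité est de s'assurer que la récurrence ne défait pas, à l'étape $i$, la semi-factorialité obtenue aux étapes $j<i$ ; c'est exactement ce que règle la stabilité de la semi-factorialité par un morphisme isomorphe à la fibre générique, qu'il faut penser à dégager et à appliquer aussi bien aux éclatements intermédiaires qu'à la normalisation finale, et après chacun des changements de base $T_j\ra S$.
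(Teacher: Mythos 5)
Votre démonstration est correcte et suit essentiellement la voie prévue par l'article, dont la preuve se réduit précisément à cela : appliquer le théorème \ref{T} successivement à chacun des $T_i$, composer les éclatements admissibles grâce à \cite{GR} 5.1.4, puis normaliser à la fin (lemme \ref{norm}). Le point que vous dégagez explicitement --- la semi-factorialité se transmet par image réciproque le long d'un morphisme qui est un isomorphisme au-dessus des fibres génériques, ce qui garantit que les étapes ultérieures et la normalisation ne détruisent pas ce qui est acquis --- est exactement ce que l'article laisse implicite.
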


On obtient ainsi des modèles semi-factoriels sur $S$ qui commutent à certaines extensions de traits \emph{prescrites} et \emph{en nombre fini}. Nous allons voir que l'on peut construire des modèles semi-factoriels sur $S$ qui commutent à une certaine \emph{classe} d'extensions de traits. 

\begin{Def} \label{permis}
Soit $T\ra S$ un morphisme de traits. On dira que $T\ra S$ est \emph{permis} si les conditions suivantes sont vérifiées : le trait $T$ est strictement hensélien, le morphisme $T\ra S$ est régulier, et le pull-back $\cO(S)\ra\cO(T)$ envoie uniformisante sur uniformisante.
\end{Def}

Notant $K$, $L$ les corps de fonctions $S$, $T$, et $k$, $l$ les corps résiduels aux points fermés de $S$, $T$, le morphisme $T\ra S$ est régulier s'il est plat, et si les deux extensions $L/K$ et $l/k$ sont séparables (non nécessairement algébriques) (\cite{EGA IV}${}_2$ 6.8.1 et 6.7.6). Par exemple, si $S$ est excellent et $k$ séparablement clos, le spectre $\widehat{S}$ du complété de $\cO(S)$ pour la topologie $\fm$-adique définit une extension permise $\widehat{S}\ra S$.

\begin{Rem}
\emph{Si $T\ra S$ est une extension permise de traits, elle est en particulier \emph{d'indice de ramification $1$} au sens de \cite{BLR} 3.6/1. On demande en plus ici que l'extension des corps de fonctions soit séparable, et que $T$ soit strictement hensélien.}
\end{Rem}

\begin{Rem}\label{normal}
\emph{Si $T\ra S$ est une extension régulière de traits, telle que le pull-back $\cO(S)\ra \cO(T)$ envoit uniformisante sur uniformisante, alors pour tout schéma \emph{normal} (resp. \emph{régulier}) $X$ localement de type fini sur $S$, le schéma $X_T:=X\times_S T$ est encore \emph{normal} (resp. \emph{régulier}). Cela résulte de \cite{EGA IV}${}_2$ 6.5.4 (ii) et 6.7.4.1 (resp. 6.5.2 (ii) et 6.7.4.1).}
\end{Rem}

\begin{Th} \label{universel}
Soient $S$ un trait de corps de fonctions $K$ et $X_K$ un $K$-schéma propre géométriquement normal. 

Pour tout modèle propre et plat $X/S$ de $X_K$, il existe un morphisme $\widetilde{X}\ra X$, composé d'un éclatement centré dans la fibre fermée de $X/S$ suivi de sa normalisation, tel que $\widetilde{X}/S$ soit un modèle propre et plat de $X_K$, \emph{normal et semi-factoriel sur $S$ et après toute extension permise de traits $T\ra S$}.
\end{Th}
Soit $X/S$ un modèle propre et plat de $X_K$. La construction de la sous-section \ref{sup} s'applique en particulier lorsque $T$ est un hensélisé strict de $S$, fournissant une flèche $X'\ra X$. Le modèle $X'$ ainsi construit est semi-factoriel après toute extension permise de $S$. Pour le voir, nous avons besoin du lemme d'approximation suivant.

\begin{Lem} \label{desc}
Soient $S$ un trait de corps de fonctions $K$ et $S^{\sh}$ un hensélisé strict de $S$ de corps de fonctions $K^{\sh}$. Soit $G_K$ un $K$-schéma en groupes localement de type fini, dont la composante neutre réduite $G_{K,\red}^0$ est lisse sur $K$. Soit $Z_{K^{\sh}}$ une composante connexe de $G_{K^{\sh}}$. 

Considérons une extension permise $T/S$ et notons $L$ le corps de fonctions de $T$. Comme $T$ est strictement hensélien, on peut fixer une factorisation $T\ra S^{\sh}\ra S$, induisant des plongements $K\subseteq K^{\sh}\subseteq L$. Alors, si l'ensemble $Z_{K^{\sh}}(L)$ est non vide, l'ensemble $Z_{K^{\sh}}(K^{\sh})$ l'est aussi.
\end{Lem}

\begin{proof}
Les deux extensions résiduelles de $S^{\sh}/S$ sont algébriques. D'après \cite{BA} Chap. 5, \S 15, n${}^\circ$4, Corollaire 3, l'extension $T/S^{\sh}$ est donc permise. Ceci permet de se ramener au cas où $S=S^{\sh}$. 

Supposons $Z_K(L)$ non vide. Le réduit $Z_{K,\red}$ est alors lisse sur $K$. En effet, comme l'extension $L/K$ est séparable, on a $(Z_{K,\red})_L=Z_{L,\red}$ (\cite{EGA IV}${}_2$ 4.6.4). Or, comme $Z_K(L)$ est non vide, il existe une composante connexe $Z_{L}^0$ de $Z_L$ qui est un torseur trivial sous $G_{L}^0$. En particulier, le réduit $Z_{L,\red}^0$ est $L$-isomorphe au réduit $G_{L,\red}^0=(G_{K,\red}^0)_L$, donc est lisse sur $L$. Comme $Z_{L,\red}^0$ est une composante connexe de $(Z_{K,\red})_{L}$, la projection de ce dernier sur $Z_{K,\red}$ induit un morphisme
$$Z_{L,\red}^0\ra Z_{K,\red}.$$
Ce morphisme est plat, et surjectif : la composante connexe $Z_{L,\red}^0$ est définie sur une sous-extension finie de $L/K$ (appliquer \cite{EGA IV}${}_2$ 4.9.7 en notant que $Z_{K,\red}$ est de type fini sur $K$), donc son image dans $Z_{K,\red}$ est à la fois ouverte et fermée. En conclusion, le schéma $Z_{K,\red}$ est bien lisse sur $K$ (\cite{EGA IV}${}_4$ 17.7.1 (ii)).

Ceci étant, choisissons $\lambda_L\in Z_K(L)=(Z_{K,\red})(L)$. En chassant les dénominateurs, on peut toujours trouver un $S$-schéma affine de type fini $U$ dont la fibre générique est un voisinage ouvert de l'image de $\lambda_L$ dans $Z_{K,\red}$, et tel que $\lambda_L$ se prolonge en un élément $\lambda\in U(T)$. En particulier, la fibre fermée de $U/S$ est non-vide. Maintenant, l'extension $T/S$ étant permise, elle est d'indice de ramification $1$ au sens de \cite{BLR} 3.6/1. Comme $U_K$ est lisse sur $K$, il existe donc un $S$-morphisme $U'\ra U$ composé d'un nombre fini de dilatations  centrées dans les fibres spéciales, tel que $\lambda$ se relève dans le lieu lisse de $U'/S$ (\emph{loc. cit.} 3.6/4). Mais alors $U'(S)$ est nécessairement non vide, et \emph{a fortiori} $Z_K(K)=(Z_{K,\red})(K)$ est non vide.
\end{proof}

\begin{proof}[Démonstration du théorème \ref{universel}.]
Conservons les constructions des étapes \textbf{1)}, \textbf{2)} et \textbf{3)} de la sous-section \ref{sup} dans le cas $T=S^{\sh}$. D'après l'étape \textbf{4)}, on sait déjà que le modèle $X'$ obtenu est semi-factoriel après l'extension $S^{\sh}/S$. Soit maintenant $T/S$ une extension permise de traits quelconque. Notons $L$ le corps de fonctions de $T$, et fixons une factorisation $T\ra S^{\sh}\ra S$, induisant des plongements $K\subseteq K^{\sh}\subseteq L$. Considérons un faisceau inversible $\cL_L$ sur $X_L$ et montrons qu'il se prolonge en un faisceau inversible sur $X'\times_S T$. 

Rigidifions $\cL_L$ le long de $x_K\otimes_K L$ pour obtenir un point $\lambda_L\in (P_K,x_K)(L)=(P_K,x_K)_{K^{\sh}}(L)$. Soit $Z_{K^{\sh}}$ la composante connexe de $(P_K,x_K)_{K^{\sh}}$ telle que $\lambda_L\in Z_{K^{\sh}}(L)$. D'après le lemme \ref{desc}, il existe $\lambda_{K^{\sh}}\in Z_{K^{\sh}}(K^{\sh})\subseteq (P_K,x_K)(K^{\sh})$. La différence $\delta_L:=\lambda_L-\lambda_{K^{\sh}}$ dans le groupe $(P_K,x_K)(L)$ appartient au sous-groupe $(P_K,x_K)^0(L)$. En effet, comme $Z_{K^{\sh}}(K^{\sh})$ est non vide, la composante connexe $Z_{K^{\sh}}$ de $(P_K,x_K)_{K^{\sh}}$ est \emph{géométriquement} connexe. Il en résulte que $\lambda_{K^{\sh}}$ et $\lambda_L$, vus comme $L$-points de $(P_K,x_K)_L$, sont dans la même composante connexe de $(P_K,x_K)_L$, à savoir $Z_L$. On a donc bien $\delta_L\in (P_K,x_K)_{L}^0(L)=(P_K,x_K)^0(L)$.

Maintenant, le faisceau inversible $(1\times_K\delta_L)^*\cP_K$ sur $X_L$ se prolonge en un faisceau inversible sur $X'\times_S T$. En effet, l'extension $T/S$ étant permise, elle est d'indice de ramification $1$ au sens de \cite{BLR} 3.6/1. D'après \emph{loc. cit.} 7.2/1, le modèle de Néron $N_0$ de $(P_K,x_K)_{\red}^0$ commute donc à l'extension $T/S$. Par conséquent, d'après l'étape \textbf{1)} de la sous-section \ref{sup} appliquée au cas de $S^{\sh}$, il existe $\{d_{\gamma}\in\bbZ\ |\ \gamma\in\Gamma\}$ et $\delta_{L}^0\in (P_K,x_K)_{\red}^0(L)$ se prolongeant en $\delta_{T}^0\in N_{0}^0(T)$
tels que
$$\delta_L=\sum_{\gamma\in \Gamma}d_{\gamma}\cdot\lambda_{K^{\sh},0}^{\gamma}+\delta_{L}^0.$$
Le faisceau inversible 
$$\cL_{T,\delta}:=\otimes_{\gamma\in \Gamma}\big((1\times_S \lambda_{0}^\gamma)^*\widetilde{\cM}\otimes_{S^{\sh}} T\big)^{\otimes d_{\gamma}}\otimes(1\times_S\delta_{T}^0)^*\widetilde{\cM}$$ 
prolonge alors $(1\times_K\delta_L)^*\cP_K$ sur $X'\times_S T$. 

Par ailleurs, comme $(X')_{S^{\sh}}$ est semi-factoriel sur $S^{\sh}$, il existe un faisceau inversible $\cL_{S^{\sh}}$ qui prolonge $(1\times_K\lambda_{K^{\sh}})^*\cP_K$ sur $(X')_{S^{\sh}}$. Finalement, le faisceau inversible
$$\cL_T:=(\cL_{S^{\sh}}\otimes_{S^{\sh}}T)\otimes \cL_{T,\delta}$$
prolonge $\big((1\times_K\lambda_{K^{\sh}})^*\cP_K\otimes_{K^{\sh}}L\big)\otimes(1\times_K\delta_L)^*\cP_K\simeq(1\times_K\lambda_L)^*\cP_K\simeq\cL_L$ sur $X'\times_S T$.

On a ainsi prouvé que le schéma $(X')_T$ est semi-factoriel sur $T$ pour toute extension permise $T/S$. Quitte à éclater $X'/S$ le long d'un sous-schéma fermé de sa fibre spéciale, on peut supposer que $X'$ est aussi semi-factoriel sur $S$ (théorème \ref{T}). Comme le morphisme de normalisation
$\widetilde{X}\ra X'$ est propre (lemme \ref{norm}), on obtient un morphisme $\widetilde{X}\ra X$ comme dans l'énoncé du théorème.
\end{proof}

\subsection{Compactifications semi-factorielles d'un schéma régulier} \label{scsf}

\begin{Def} \label{xcomp}
Soit $S$ un trait de corps de fonctions $K$. Soit $X$ un $S$-schéma de type fini et séparé. On appellera \emph{$X_K$-compactification de $X$} un $S$-schéma propre $\overline{X}$ muni d'une immersion ouverte $X\ra\overline{X}$ induisant un isomorphisme entre les fibres génériques.
\end{Def}

\begin{Th} \label{va}
Soient $S^{\sh}\ra T\ra S$ des extensions fidèlement plates de traits, telles que la composée $S^{\sh}\ra S$ soit une hensélisation stricte de $S$. Soit $X$ un $S$-schéma de type fini, séparé et plat, dont la fibre générique $X_K$ est un $K$-schéma propre géométriquement normal. On suppose le schéma $X$ \emph{régulier}.

Il existe une $X_K$-compactification $X\ra\overline{X}$ de $X$, avec $\overline{X}$ plat sur $S$, \emph{normal} et tel que \emph{la flèche de restriction $\Pic(\overline{X}_{T})\ra\Pic(X_{T})$ soit surjective.} (En particulier, le schéma $\overline{X}_T$ est normal et semi-factoriel sur $T$.)
\end{Th}

 \begin{proof}
D'après la version relative du théorème de compactification de Nagata, il existe un $S$-schéma propre $X_1$ et une $S$-immersion ouverte $X\ra X_1$. Quitte à remplacer $X_1$ par l'adhérence schématique de $X$ dans $X_1$, on peut supposer que $X\ra X_1$ est une $X_K$-compactification de $X$ (définition \ref{xcomp}) et que $X_1$ est plat sur $S$. En appliquant le théorème \ref{T} \emph{et} la remarque \ref{Remreg}, on trouve un éclatement $X$-admissible $(X_1)'\ra X_1$ tel que $(X_2)_T$ soit semi-factoriel sur $T$. Quitte à remplacer $X_1$ par $(X_1)'$, on peut donc supposer $(X_1)_T/T$ semi-factoriel.

Considérons maintenant les composantes irréductibles réduites $V_j$, $j=1,\ldots,\nu$, de la fibre spéciale de $X_{T}/T$. Le trait $T$ est limite projective filtrante de traits $S_\alpha$ étales sur $S$. Son corps résiduel $l$ au point fermé est donc limite inductive filtrante des corps résiduels $k_\alpha$ aux points fermés des $S_\alpha$. Les $V_{j}$ proviennent donc des composantes irréductibles réduites $V_{k_\alpha,j}$ de $X_{k_\alpha}$ pour un certain $\alpha$. Comme le schéma $X_{S_{\alpha}}$ est régulier, l'idéal cohérent $\cV_{k_\alpha,j}$ sur $X_{S_{\alpha}}$ définissant $V_{k_\alpha,j}\hookrightarrow X_{S_{\alpha}}$ est inversible. Son pull-back $\cV_{j}:=\cV_{k_\alpha,j}\otimes_{S_\alpha} T$ définit $V_{j}$ dans $X_T$. 

Prolongeons $\cV_{k_\alpha,1}$ en un module cohérent $\cM_1$ sur $X_1\times_S S_{\alpha}$. Puis appliquons le théorème \ref{prolongement} à la première projection $X_1\times_S S_{\alpha}\ra X_1$ et au module $\cM_1$, avec l'ouvert schématiquement dense $X$ de $X_1$. On obtient un carré cartésien
\begin{displaymath}
\xymatrix{
X_1\times_S S_{\alpha}\ar[d]^{p_1} &\ar[l] X_2\times_S S_{\alpha} \ar[d]^{p_1} \\
X_1 & \ar[l] X_2
}
\end{displaymath}
où $X_2\ra X_1$ est un éclatement $X$-admissible, et un module inversible $\widetilde{\cM_1}$ sur $X_2\times_S S_{\alpha}$ qui prolonge $\cV_{k_\alpha,1}$. L'immersion ouverte composée $X\ra X_2$ est une $X_K$-compactification de $X$, et $X_2$ est plat sur $S$. En procédant de même avec un prolongement cohérent de $\cV_{k_\alpha,2}$ sur $X_2\times_S S_{\alpha}$, etc, on obtient finalement une $X_K$-compactification $X\ra X_{\nu}=:X'$, avec $X'$ plat sur $S$, semi-factoriel sur $T$, et tel que tous les $\cV_{k_\alpha,j}$ se prolongent en des faisceaux inversibles sur $X'\times_S S_{\alpha}$. \emph{A fortiori}, les $\cV_j$ se prolongent en des faisceaux inversibles sur $(X')_T$.

L'homomorphisme de restriction $\Pic((X')_{T})\ra\Pic(X_{T})$  est surjectif. En effet, soit $\cL$ un faisceau inversible sur $X_{T}$. Comme $(X')_{T}/T$ est semi-factoriel, le faisceau $\cL_L:=\cL\otimes L$ se prolonge en un faisceau inversible $\cL'$ sur $(X')_{T}$. Sur l'ouvert régulier $X_T$, le faisceau inversible $\cL\otimes(\cL'|_{X_T})^{-1}$ est alors trivial en restriction à $X_L$, c'est-à-dire qu'il est isomorphe à un produit de puissances des $\cV_{j}$, $j=1,\ldots,\nu$. Mais comme ceux-ci se prolongent de façon inversible sur $(X')_T$, il en va finalement de même pour $\cL$.

En conclusion, en notant $\overline{X}$ le normalisé de $X'$, on obtient une $X_K$-compacti\-fi\-ca\-tion
$X\ra\overline{X}$ ayant les propriétés requises.
\end{proof}
  
Concernant les changements de traits, on a les résultats analogues à ceux de la section précédente.

\begin{Cor} \label{vauniv}
Soit $S^{\sh}\ra S$ une hensélisation stricte d'un trait $S$. Soient $n$ un entier $\geq 1$, et $T_i/S$, $i=1,\ldots,n$, des sous-extensions de traits fidèlement plates de $S^{\sh}/S$. Soit $X$ un $S$-schéma de type fini, séparé et plat, dont la fibre générique $X_K$ est un $K$-schéma propre géométriquement normal. On suppose le schéma $X$ régulier.

Il existe une $X_K$-compactification $X\ra\overline{X}$ de $X$, avec $\overline{X}$ plat sur $S$, \emph{normal} et tel que \emph{la flèche de restriction $\Pic(\overline{X}_{T})\ra\Pic(X_{T})$ soit surjective pour $T=T_i$, $i=1,\ldots,n$, et pour $T/S$ extension permise arbitraire.} 
\end{Cor}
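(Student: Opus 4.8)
The strategy is to combine the proof technique of Theorem \ref{va} with the two refinements already available in the preceding subsection: the finite-family statement of Corollary \ref{Ts} (which allows one to make a single model semi-factorial after finitely many prescribed extensions $T_i/S$), and the universal statement of Theorem \ref{universel} (which handles \emph{all} permitted extensions simultaneously). First I would invoke Nagata's relative compactification theorem, replace the resulting $S$-scheme by the schematic closure of $X$ to get a flat $X_K$-compactification $X\subseteq X_1$. The key point, as in the proof of Theorem \ref{va}, is then to do two things at once: (a) make $X_1$ semi-factorial after each $T_i$ and after every permitted extension, and (b) arrange that the reduced irreducible components of the special fibres of $(X_1)_{T_i}/T_i$ extend to invertible sheaves on $X_1$ over an appropriate étale neighbourhood.

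For step (a) I would apply the combined construction: running the argument of Theorem \ref{universel} (the case $T=S^{\sh}$, preserving Remark \ref{Remreg} so that the blow-up centre avoids the regular locus) together with finitely many extra applications of Theorem \ref{T} for the $T_i$, as in Corollary \ref{Ts}, using that a composite of admissible blow-ups is an admissible blow-up (\cite{GR} 5.1.4). This produces an admissible blow-up $X_1'\to X_1$, centred in the special fibre and away from $X_{\mathrm{reg}}$, such that $(X_1')_{T_i}$ is semi-factorial over $T_i$ for each $i$ and $(X_1')_T$ is semi-factorial over every permitted $T$. Replace $X_1$ by $X_1'$.

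For step (b), since $X$ is regular and $X\subseteq X_1$ is an open immersion meeting the special fibre, the reduced irreducible components $V_j^{(i)}$ of the special fibre of $X_{T_i}/T_i$ are, as in the proof of Theorem \ref{va}, defined by \emph{invertible} ideal sheaves on $X_{S_\alpha}$ for a suitable étale neighbourhood $S_\alpha/S$ through which $T_i$ factors (and one can take a single $\alpha$ working for all the finitely many $i$, since there are finitely many components in total). Extending these ideal sheaves to coherent sheaves $\cM_{i,j}$ on $X_1\times_S S_\alpha$ and applying Theorem \ref{prolongement} to the first projection $X_1\times_S S_\alpha\to X_1$ with the schematically dense open $X\subseteq X_1$, finitely many times in succession as in the proof of Theorem \ref{va}, yields a further $X$-admissible blow-up $X_1\leadsto X'$, still flat over $S$, still semi-factorial after the $T_i$ and after every permitted extension (these properties are stable under $X$-admissible blow-up by Theorem \ref{T} / Corollary \ref{Ts} combined with \cite{GR} 5.1.4, since the new centre sits over the special fibre), on which all the $\cV_{k_\alpha,j}^{(i)}$ extend to invertible sheaves over $S_\alpha$, hence the components of the special fibres of $X_{T_i}/T_i$ extend to invertible sheaves on $(X')_{T_i}$.

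Then exactly the argument at the end of the proof of Theorem \ref{va} applies verbatim, for each $T=T_i$ and for each permitted $T$: given an invertible sheaf $\cL$ on $X_T$, semi-factoriality of $(X')_T$ extends $\cL_L$ to $\cL'$ on $(X')_T$, the difference $\cL\otimes(\cL'|_{X_T})^{-1}$ is trivial on $X_L$ hence is a product of powers of the components $\cV_j$, which extend invertibly, so $\cL$ itself extends. Taking $\overline{X}$ to be the normalisation of $X'$ (finite by Lemma \ref{norm}) gives the required $X_K$-compactification; normality of $\overline{X}_T$ and surjectivity of $\Pic(\overline{X}_T)\to\Pic(X_T)$ follow as in Theorem \ref{va}. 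The main obstacle is purely bookkeeping: ensuring that a single étale neighbourhood $S_\alpha$ and a single sequence of blow-ups can be chosen to handle the finitely many $T_i$ \emph{and} be compatible with the universal (permitted-extension) construction; once one notes that all the relevant properties are preserved under further $X$-admissible blow-ups with centre in the special fibre and that finitely many descent data descend to a common $S_\alpha$, there is no real difficulty.
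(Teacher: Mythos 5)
Your treatment of the finitely many prescribed $T_i$ is essentially the paper's: make the compactification semi-factorial by admissible blow-ups away from $X$ (Theorem \ref{universel} with Remark \ref{Remreg}, Corollary \ref{Ts}), then extend the vertical components of the special fibres of $X_{T_i}/T_i$ via Theorem \ref{prolongement}, and note that all these properties are preserved under further $X$-admissible blow-ups. The gap is in the claim that the concluding argument applies \emph{verbatim} for an arbitrary permitted extension $T/S$. What must be proved there is the surjectivity of $\Pic(\overline{X}_T)\ra\Pic(X_T)$, and the bridging step requires the reduced irreducible components of the special fibre of $X_T/T$ to extend to invertible sheaves on $(X')_T$. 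You only arranged this for the components of the special fibres over the prescribed $T_i$, descended to a single étale neighbourhood $S_\alpha$. For a general permitted $T$, the residue field $l$ is a separably closed extension of $k$ which need not be algebraic, so the special fibre $X_l$ does not descend to any $S_\alpha$, and it may have strictly more irreducible components than the special fibres you treated (components split after residue extension); these finer components are not pullbacks of the ideals you extended, so for instance $\cL=\cO_{X_T}(V)$ with $V$ one such component is not reached by your argument. Your step (a) only controls $\Pic((X')_T)\ra\Pic(X_L)$, not $\Pic(X_T)$.

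The missing ingredient — and this is precisely how the paper proceeds — is to apply the Theorem \ref{va} machinery once more \emph{to the extension $S^{\sh}/S$ itself}, so that the invertible ideals $\cV_{k^s,j}$ defining the reduced components of the special fibre of $X_{S^{\sh}}/S^{\sh}$ extend to invertible sheaves on $(X')_{S^{\sh}}$, and then to observe that for every permitted $T$ one may fix a factorization $T\ra S^{\sh}\ra S$ ($T$ is strictly henselien), and that, the residue field of $S^{\sh}$ being separably closed and the residue extension of $T/S$ separable, the reduced components of the special fibre of $X_T/T$ are exactly the base changes of the $\cV_{k^s,j}$; hence their ideals, together with their invertible extensions, pull back to $(X')_T$. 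Combined with the semi-factoriality of $(X')_T$ and the regularity of $X_T$ (Remark \ref{normal}), this gives the surjectivity for arbitrary permitted $T$. Without this descent-from-$S^{\sh}$ observation, the « pour toute extension permise » part of the corollary is not established by your argument.
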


\begin{proof}
Soit $X'/S$ une $X_K$-compactification de $X$, plate sur $S$. Quitte à modifier $X'$ par un éclatement $X$-admissible, on peut supposer que pour toute extension permise $T/S$, induisant une extension générique $L/K$, la flèche de restriction à la fibre générique
$$\Pic((X')_{T})\ra\Pic(X_{L})$$
est surjective (théorème \ref{universel} et remarque \ref{Remreg}). 

Soient $\cV_{k^s,j}$, $j=1,\ldots,\nu$, les idéaux inversibles sur $X_{S^{\sh}}$ définissant les composantes irréductibles réduites de la fibre spéciale de $X_{S^{\sh}}/S^{\sh}$.
Après un nouvel éclatement $X$-admissible de $X'$, on peut supposer que les $\cV_{k^s,j}$ se prolongent en des faisceaux inversibles sur $(X')_{S^{\sh}}$ (théorème \ref{va} appliqué à $S^{\sh}/S$). 

Alors, pour toute extension permise $T/S$, la flèche de restriction à $X_T$
$$\Pic((X')_{T})\ra\Pic(X_{T})$$
est surjective. En effet, le trait $T$ étant strictement hensélien, on peut fixer une factorisation $T\ra S^{\sh}\ra S$ de $T/S$. Comme le corps résiduel de $S^{\sh}$ est séparablement clos, les composantes irréductibles réduites de la fibre spéciale de $X_{T}/T$ sont définies par des idéaux inversibles provenant des $\cV_{k^s,j}$ par le changement de base $T/S^{\sh}$. Ils se prolongent donc en des faisceaux inversibles sur $(X')_T$. Comme $(X')_T$ est déjà semi-factoriel, et $X_T$ est régulier (remarque \ref{normal}), l'homomorphisme de restriction ci-dessus est bien surjectif.

Ceci étant, après $n$ éclatements $X$-admissibles, on peut en outre assurer que les restrictions
$$\Pic((X')_{T_i})\ra\Pic(X_{T_i})$$
soient surjectives pour $i=1,\ldots,n$ (théorème \ref{va} appliqué à $T_i/S$). Le normalisé $\overline{X}$ de $X'$ est alors une $X_K$-compactification convenable de $X$.
\end{proof}

Le corollaire \ref{vauniv} s'applique par exemple pour obtenir des \emph{compactifications semi-factorielles du modèle de Néron d'une variété abélienne}. Plus généralement, rappelons que si $S$ est un trait de corps de fonctions $K$, un torseur $\fppf$ $X_K$ sous une variété abéleinne $A_K$ est dit \emph{non ramifié} si, notant $K^{\sh}$ le corps de fonctions d'un hensélisé strict de $S$, l'ensemble $X_K(K^{\sh})$ est non vide.

\begin{Cor} \label{mn}
Soit $S$ un trait de corps de fonctions $K$. Soient $A_K$ une variété abélienne, et $X_K$ un torseur $\fppf$ non ramifié sous $A_K$. Soit $X$ le modèle de N\'eron de $X_K$ sur $S$.

Il existe une $X_K$-compactification $X\ra\overline{X}$ de $X$, avec $\overline{X}$ projectif et plat sur $S$, \emph{normal, tel que la flèche de restriction $\Pic(\overline{X})\ra\Pic(X)$ soit surjective, et le soit encore après toute extension permise $T/S$ (définition \ref{permis}).} 

Si on se prescrit un nombre fini d'extensions de traits fidèlement plates $T_i/S$, sous-extensions d'une hensélisation stricte $S^{\sh}/S$, on peut en outre demander que $\Pic(\overline{X}_{T_i})\ra\Pic(X_{T_i})$ soit surjective pour tout $i$.
\end{Cor}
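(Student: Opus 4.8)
Le plan est de déduire l'énoncé du corollaire \ref{vauniv}, appliqué au modèle de Néron $X$ lui-même ; il faudra donc d'abord vérifier que $X$ en satisfait les hypothèses. Le schéma $X_K$, torseur sous la variété abélienne $A_K$, est lisse, propre et géométriquement connexe sur $K$ ; sur une clôture algébrique $\overline{K}$ de $K$ il devient isomorphe à $A_{\overline{K}}$, donc est géométriquement normal. Le torseur $X_K$ étant non ramifié, on a $X_K(K^{\sh})\neq\emptyset$, donc $X_{K^{\sh}}\simeq A_{K^{\sh}}$. Le modèle de Néron $X$ existe alors sur $S$ ; comme sa formation commute aux changements de base étales (\cite{BLR}), donc aux limites projectives filtrantes de tels, $X_{S^{\sh}}$ est le modèle de Néron de $A_{K^{\sh}}$, lequel est de type fini. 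Par descente $\fpqc$ de la quasi-compacité le long de $S^{\sh}\ra S$, le schéma $X$ est donc de type fini sur $S$. Étant un modèle de Néron, $X$ est en outre lisse et séparé sur $S$, donc régulier puisque $S$ est un trait. Ainsi $X/S$ est un schéma de type fini, séparé, plat et régulier, de fibre générique $X_K$ propre et géométriquement normale.

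On peut alors appliquer le corollaire \ref{vauniv} à $X/S$, en prenant pour famille d'extensions de traits prescrites la famille $(S,T_1,\ldots,T_n)$ — l'identité $S\ra S$ étant une sous-extension de traits fidèlement plate de $S^{\sh}/S$. On obtient une $X_K$-compactification $X\ra\overline{X}$, avec $\overline{X}$ propre et plat sur $S$, normal, et telle que la flèche de restriction $\Pic(\overline{X}_{T})\ra\Pic(X_{T})$ soit surjective pour $T\in\{S,T_1,\ldots,T_n\}$ et pour toute extension permise $T/S$. Le cas $T=S$ fournit la surjectivité de $\Pic(\overline{X})\ra\Pic(X)$ ; les cas $T/S$ permise et $T=T_i$ fournissent les deux dernières assertions.

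Il reste à obtenir $\overline{X}$ \emph{projectif} sur $S$, et non seulement propre. Pour cela, je m'appuie sur le fait que le modèle de Néron $X$ est quasi-projectif sur $S$ : c'est un torseur, pour la topologie étale, sous le modèle de Néron $A$ de $A_K$, lequel est un $S$-schéma en groupes lisse, séparé, de type fini et quasi-projectif, et un tel torseur est encore quasi-projectif (cf. Raynaud, \emph{Faisceaux amples sur les schémas en groupes et les espaces homogènes}). Comme $X_K$ est propre sur $K$, la clôture schématique de $X$ dans un $\bbP^N_S$ est donc une $X_K$-compactification projective de $X$, plate sur $S$ ; on peut la prendre comme point de départ de la construction du corollaire \ref{vauniv}. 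Les modifications qui interviennent ensuite sont des éclatements, donc des morphismes projectifs, suivis d'une unique normalisation finale, finie d'après le lemme \ref{norm} et donc elle aussi projective. Le schéma $\overline{X}$ ainsi obtenu est par conséquent projectif sur $S$.

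Le point le plus délicat sera la double assertion << $X$ est de type fini et quasi-projectif sur $S$ >>, qui repose de façon essentielle sur l'hypothèse que le torseur $X_K$ est non ramifié ; une fois celle-ci acquise, tout le reste n'est qu'une application directe du corollaire \ref{vauniv} et du lemme \ref{norm}.
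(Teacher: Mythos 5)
Votre démonstration est correcte et suit essentiellement la même voie que celle du texte : on applique le corollaire \ref{vauniv} au modèle de Néron $X$ (en incluant $S$ parmi les extensions prescrites), et la projectivité de $\overline{X}$ s'obtient en partant d'une compactification projective, les modifications ultérieures étant des éclatements suivis d'une normalisation finie. La seule différence est que vous redémontrez à la main la finitude de type et la quasi-projectivité de $X$ (via la non-ramification, la descente fpqc et le théorème de Raynaud sur les faisceaux amples), là où le texte cite directement \cite{BLR} 6.5/4 et 6.4/1.
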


\begin{proof}
Le schéma $X_K$ possède bien un modèle de Néron $X$ sur $S$ d'après \cite{BLR} 6.5/4. Celui-ci est quasi-projectif sur $S$ (\emph{loc. cit.} 6.4/1). D'après le corollaire \ref{vauniv}, on peut donc bien trouver un $\overline{X}$ projectif et plat sur $S$ vérifiant les conclusions du théorème (dans la démonstration, il suffit de commencer avec une compactification \emph{projective} de $X/S$).
\end{proof}
L'intérêt de telles compactifications est d'exister \emph{sans aucune hypothèse sur la réduction de la variété abélienne $A_K$.} Bien entendu, lorsque $A_K$ a réduction semi-stable sur $S$, on dispose de compactifications bien meilleures de son modèle de Néron $A/S$ : en s'appuyant sur la théorie de la dégénérescence des variétés abéliennes de Mumford-Faltings-Chaï, Künnemann a construit une compactification régulière canonique de $A/S$ (cf. \cite{Kü}).

\subsection{Une variante globale de la semi-factorialité}\label{global}

\begin{Th} 
Soit $S$ un schéma noethérien intègre normal de dimension $1$, de corps de fonctions $K$. Soient $n$ un entier $\geq 1$, et $T_i/S$, $i=1,\ldots,n$, des $S$-schémas étales de type fini, avec $T_i$ intègre de corps de fonctions $L_i$. Soit $X$ un $S$-schéma propre et plat. On suppose que la fibre générique $X_K$ de $X/S$ est \emph{lisse} sur $K$.

Il existe un $S$-schéma $\widetilde{X}$ isomorphe à $X$ au dessus d'un ouvert non vide de $S$, ayant les propriétés suivantes : $\widetilde{X}/S$ est propre et plat, \emph{$\widetilde{X}$ est normal}, et \emph{la flèche de restriction $\Pic(\widetilde{X}_{T_i})\ra\Pic(X_{L_i})$ est surjective pour tout $i=1,\ldots,n$.}
\end{Th}

\begin{proof}
Comme $X_K$ est lisse, il existe un ensemble fini $F:=\{s_1,\ldots,s_m\}$ de points fermés de $S$ tel que $X_U:=X\times_S U$ soit lisse sur l'ouvert $U:=S-F$ de $S$. Soit $j\in\{1,\ldots,m\}$. D'après le corollaire \ref{Ts}, il existe un modèle propre plat normal $X_{(s_j)}$ de $X_K$ sur $\cO_{S,s_j}$, tel que pour tout $i=1,\ldots,n$, et pour tout point $t_{i_j}$ de $T_i$ au-dessus de $s_j$, le schéma $X_{(s_j)}\otimes_{\cO_{S,s_j}}\cO_{T_i,t_{i_j}}$ soit semi-factoriel sur $\cO_{T_i,t_{i_j}}$. Chacun des $X_{(s_j)}$, $j=1,\ldots,m$, peut être étendu en un schéma $X_j$ de type fini au-dessus d'un voisinage ouvert $S_j$ de $s_j$ dans $S$ (\cite{EGA IV}${}_3$ 8.8.2). Quitte à diminuer $S_j$ autour de $s_j$, on peut supposer $X_j/S_j$ propre (\emph{loc. cit.} 8.10.5 (xii)), puis plat. On peut même supposer $S_j\cap F=\{s_j\}$ et $X_j=X$ au-dessus de $S_j\cap U$. Chaque $X_j$ peut alors être recollé à $X_U$ au-dessus de $S_j\cap U$, pour obtenir un schéma  $\widetilde{X}$ propre et plat sur $S$, lisse au-dessus de $U$, et normal.

Montrons que la restriction $\Pic(\widetilde{X}_{T_i})\ra\Pic(X_{L_i})$ est surjective pour tout $i=1,\ldots,n$. Comme la formation de $\widetilde{X}$ commute au changement de base $T_i\ra S$, on peut supposer $T_i=S$, et il faut alors montrer que $\Pic(\widetilde{X})\ra\Pic(X_K)$ est surjective. Soit $\cL_K$ un faisceau inversible sur $X_K$. Comme $X_K$ est réduit, on peut supposer que $\cL_K$ est le faisceau défini par un diviseur $D_K$ sur $X_K$. On notera $H$ le cycle $1$-codimensionnel sur $\widetilde{X}$, obtenu en prenant l'adhérence schématique du cycle $\cyc(D_K)$ défini par $D_K$ sur $X_K$. Par ailleurs, il existe, pour tout $j=1,\ldots,m$, un diviseur $D_j$ qui prolonge $D_K$ sur $X_{(s_j)}$. La partie verticale $V_j$ du cycle $\cyc(D_j)$ est une combinaison $\bbZ$-linéaire des composantes irréductibles réduites de la fibre $X_{s_j}$. On peut donc voir $V_j$ comme un cycle $1$-codimensionnel sur $\widetilde{X}$. On va montrer que 
$$Z:=H+\sum_{j=1}^n V_j$$
est de la forme $\cyc(D)$ pour un certain diviseur $D$ sur $\widetilde{X}$, ce qui achèvera la démonstration puisque $X_K$ est normal. 

Comme le schéma $\widetilde{X}$ est normal, il suffit de voir que le cycle $Z$ est localement principal. Ecrivons $Z=\sum_{\xi\in \widetilde{X}^{(1)}}n_\xi\cdot\overline{\{\xi\}}$ et posons pour tout $x\in \widetilde{X}$,
$$T_x:=\Spec(\cO_{\widetilde{X},x}),\quad Z_x:=\sum_{\xi\in \widetilde{X}^{(1)}}n_\xi\cdot(\overline{\{\xi\}}\cap T_{x}).$$
Le cycle $Z$ est localement principal si et seulement si, pour tout $x\in \widetilde{X}$, il existe une fonction rationnelle $f$ sur $T_x$ telle $Z_x=\cyc(\div(f))$ (\cite{EGA IV}${}_4$ 21.6.7). Maintenant, si $x\in X$ se projette sur un point $s_j$ de $F$, le cycle $Z_x$ coïncide avec le cycle
$$\sum_{\xi\in (X_{(s_j)})^{(1)}}n_\xi\cdot(\overline{\{\xi\}}\cap T_{x})\quad=\quad(H+V_j)\cap T_x,$$
qui n'est autre que le cycle localement principal $\cyc(D_j|_{T_x}).$ Sinon, le point $x$ appartient a l'ouvert régulier $\widetilde{X}_U$, sur lequel $Z$ est automatiquement localement principal. Le cycle $Z$ est donc bien localement principal sur $\widetilde{X}$.
\end{proof}

\section{Modèle de Néron d'une variété de Picard} \label{section2}

Soit $S$ un trait de corps de fonctions $K$ et corps résiduel $k$ au point fermé. Soit $X_K$ un $K$-schéma propre géométriquement normal. Sa variété de Picard $A_K:=\Pic_{X_K/K,\red}^0$ est une variété abélienne sur $K$ (\cite{G} 3.2). En particulier, elle possède un modèle de Néron $A$ de type fini sur $S$. Soit d'autre part $X/S$ un modèle propre et plat de $X_K$, qui est semi-factoriel sur un hensélisé strict de $S$. Un tel modèle existe toujours d'après le théorème \ref{T}. Le foncteur de Picard $\Pic_{X/S}$ est le faisceau $\fppf$ associé au préfaisceau $T\ra\Pic(X\times_S T)$. Dans cette section, on décrit $A$ en termes de $\Pic_{X/S}$ (théorèmes \ref{Néron} et \ref{Néronbis}). On en tire ensuite une conséquence sur l'équivalence algébrique des faisceaux inversibles sur $X$ (corollaire \ref{n}).

\subsection{Foncteur de Picard et modèle de Néron}
Commençons avec un trait $S$ de corps de fonctions $K$, et un schéma $X/S$ propre et plat à fibre générique $X_K$ géométriquement normale, de sorte $A_K:=\Pic_{X_K/K,\red}^0$ est un $K$-schéma \emph{en groupes}, et même une \emph{variété abélienne}.

Soit $E$ l'adhérence schématique de la section unité de $\Pic_{X_K/K}$ dans $\Pic_{X/S}$ (\cite{R} 3.2 c)). Le quotient $\fppf$ $\Pic_{X/S}/E$ a pour fibre générique $\Pic_{X_K/K}$. On note $Q$ l'adhérence schématique de $A_K$ dans $\Pic_{X/S}/E$, et $P$ l'image réciproque de $Q$ par l'épimorphisme canonique $q:\Pic_{X/S}\twoheadrightarrow\Pic_{X/S}/E$. L'inclusion $Q\hookrightarrow \Pic_{X/S}/E$ induit donc un morphisme cartésien de suites exactes :
\begin{displaymath}
\xymatrix{
0 \ar[r]  & E \ar[r] \ar@{=}[d] & \Pic_{X/S} \ar[r]^{q}        & \Pic_{X/S}/E \ar[r] & 0 \\
0 \ar[r]            & E \ar[r] & P \ar[r] \ar@{^{(}->}[u] & Q \ar[r] \ar@{^{(}->}[u] & 0.
}
\end{displaymath}
Choisissons un rigidificateur $Y\ra X$ de $\Pic_{X/S}$ (\cite{R} 2.2.3 c)). On peut considérer le foncteur de Picard de $X/S$ relatif au rigidificateur $Y$, noté $(\Pic_{X/S},Y)$ (\emph{loc. cit.} 2.1). C'est un espace algébrique en groupes localement de type fini sur $S$ (\emph{loc. cit.} 2.3.1), et on dispose d'un épimorphisme canonique $r:(\Pic_{X/S},Y)\twoheadrightarrow\Pic_{X/S}$
(\emph{loc. cit.} 2.1.2). Notant $H$ l'adhérence schématique du noyau de $r_K$ dans $(\Pic_{X/S},Y)$, on a $r(H)\subseteq E$ par continuité. Il est prouvé dans \emph{loc. cit.} 4.1 que 
$$(\Pic_{X/S},Y)/H \xrightarrow{\bar{r}} \Pic_{X/S}/E$$ 
est un isomorphisme, et que par suite $\Pic_{X/S}/E$ est un schéma en groupes localement de type fini et séparé sur $S$. Par suite, l'inclusion $Q\hookrightarrow \Pic_{X/S}/E$ est une immersion fermée de schémas et $Q$ est plat sur $S$.

Soit $(P,Y)$ l'image réciproque de $P$ par $r$, i.e. l'image réciproque de $Q$ par le composé $q\circ r$. C'est un sous-espace fermé de $(\Pic_{X/S},Y)$, qui par conséquent contient $H$. D'où un carré cartésien
\begin{displaymath}
\xymatrix{
(\Pic_{X/S},Y)/H \ar[r]^{\bar{r}} & \Pic_{X/S}/E \\
(P,Y)/H \ar[r]^{\bar{\bar{r}}} \ar@{^{(}->}[u]& P/E=Q. \ar@{^{(}->}[u]
}
\end{displaymath}
En particulier $\bar{\bar{r}}$ est un isomorphisme. 

En appliquant \cite{SGA3} I, Exp. VI${}_B$, 9.2 (xi) (ou plutôt son analogue pour les $S$-espaces algébriques en groupes) à la suite exacte 
$$0\ra H \ra (P,Y) \ra Q \ra 0,$$
on obtient que $(P,Y)$ est $S$-plat. Celui-ci est donc contenu dans $\overline{r^{-1}(A_K)}$, adhérence schématique dans $(\Pic_{X/S},Y)$ de l'image réciproque de $A_K$ par $r$. Mais par continuité, on a $q\circ r \big(\overline{r^{-1}(A_K)}\big)\subseteq Q$, i.e. $\overline{r^{-1}(A_K)}\subseteq (P,Y)$. Ainsi $(P,Y)=\overline{r^{-1}(A_K)}$. Puis  
$$P=r\big((P,Y)\big)=r\big(\overline{r^{-1}(A_K)}\big)$$
est contenu (par continuité) dans $\overline{A_K}$, adhérence schématique de $A_K$ dans $\Pic_{X/S}$. Comme $q(\overline{A_K})\subseteq Q$ (toujours par continuité), on trouve finalement $P=\overline{A_K}$. En résumé :
\vspace*{2mm}
\begin{itemize}
\item $X$ schéma propre et plat sur $S$ à fibre générique $X_K$ géométriquement normale
\item $A_K:=\Pic_{X_K/K,\red}^0$ variété de Picard de $X_K$
\item $\Pic_{X/S}$ foncteur de Picard $\fppf$ de $X/S$
\item $P$ adhérence schématique de $A_K$ dans $\Pic_{X/S}$
\item $E$ adhérence schématique de $0_K\in A_K$ dans $\Pic_{X/S}$
\item $Q:=P/E$ plus grand quotient séparé sur $S$ de $P$, représentable par un schéma localement de type fini et plat sur $S$
\item $(\Pic_{X/S},Y)$ foncteur de Picard de $X/S$ relatif au rigidificateur $Y$, muni de la flèche d'oubli $r:(\Pic_{X/S},Y)\ra\Pic_{X/S}$
\item $(P,Y)$ adhérence schématique de $r^{-1}(A_K)$ dans $(\Pic_{X/S},Y)$
\item $H$ adhérence schématique de $r^{-1}(0_K)$ dans $(\Pic_{X/S},Y)$.
\end{itemize}
\vspace*{2mm}
Lorsque $X_K$ est une courbe, ces notations sont les mêmes que celles utilisées dans \cite{BLR} 9.5, en notant que dans ce cas, $\Pic_{X_K/K,\red}^0=\Pic_{X_K/K}^0$. Par ailleurs, avec ces notations, la fibre générique de $P$ est $P_K=Q_K=A_K=\Pic_{X_K/K,\red}^0$, à ne pas confondre avec la notation $P_K:=\Pic_{X_K/K}$ utilisée section \ref{section1}, et qui n'a plus d'utilité ici.

\begin{Lem} \label{extension} Supposons de plus $X_K$ géométriquement connexe. Si $X_K(K)$ est non vide, ou si $S$ est hensélien à corps résiduel algébriquement clos, alors on a l'égalité 
$$\Pic(X_K)=\Pic_{X/K}(K).$$
Dans ce cas, si $X$ est \emph{semi-factoriel sur $S$}, alors le morphisme de restriction 
$$Q(S)\ra Q(K)$$ 
est bijectif.
\end{Lem}

\begin{proof} 
Comme $X_K$ est propre géométriquement intègre, on a $\cO(X_K)=K$, et la suite spectrale de Leray pour $\bbG_{mK}$ induit une suite exacte
$$0\ra\Pic(X_K)\ra\Pic_{X_K/K}(K)\ra\Br(K)\ra\Br(X_K),$$
où $\Br$ désigne le groupe de Brauer (\cite{BLR} 8.1/4). Si $S$ est hensélien à corps résiduel algébriquement clos, c'est un théorème de Lang que $\Br(K)=0$ (voir \cite{BLR} page 203 pour des références). Donc, dans les deux cas envisagés dans l'énoncé, la flèche $\Br(K)\ra\Br(X_K)$ est injective et $\Pic(X_K)=\Pic_{X_K/K}(K)$.

L'injectivité de $Q(S)\ra Q(K)$ est déjà acquise puisque $Q$ est séparé sur $S$. Montrons la surjectivité, sous l'hypothèse que $X/S$ est semi-factoriel.
Soit $a_K\in Q(K)=\Pic_{X_K/K}^0(K)$. D'après ce qui précède, on peut représenter $a_K$ par un faisceau inversible $\cL_K$ sur $X_K$. Puisque $X$ est semi-factoriel sur $S$, il existe un $\cO_X$-module inversible $\cL$ prolongeant $\cL_K$. L'image de la classe de $\cL$ par la composée
$$\Pic(X)\ra\Pic_{X/S}(S)\ra(\Pic_{X/S}/E)(S)$$ prolonge $a_K$ dans $Q(S)$.
\end{proof}

On peut maintenant énoncer l'analogue, en dimension supérieure, du théorème \cite{R} 8.1.4 a) (ou \cite{BLR} 9.5/4) de Raynaud.

\begin{Th} \label{Néron}
Soit $S$ un trait de corps de fonctions $K$ et corps résiduel $k$ au point fermé. Soit $X_K$ un $K$-schéma propre, géométriquement normal et géométriquement connexe. Soit $X/S$ un modèle propre et plat de $X_K$, semi-factoriel sur un hensélisé strict de $S$. Supposons que $X$ possède une quasi-section étale, ou que $k$ soit parfait. Alors le modèle de Néron $A$ de $A_K:=\Pic_{X_K/K,\red}^0$ sur $S$ est le séparé lissifié de l'adhérence schématique de $A_K$ dans $\Pic_{X/S}$. 

Plus précisément, si $P$ est cette adhérence schématique, et $E$ celle de la section unité de $\Pic_{X_K/K}$, le quotient $\fppf$ $Q:=P/E$ est un schéma en groupes localement de type fini et séparé sur $S$. Notant $Q'$ son lissifié, l'identité de $A_K$ se prolonge en un isomorphisme de $S$-schémas
\begin{displaymath}
\xymatrix{
Q' \ar[r]^{\sim} & A.
}
\end{displaymath}
\end{Th}

\begin{proof}
Toutes les données de l'énoncé commutent à l'hensélisation stricte de $S$. Pour montrer que la flèche canonique $Q'\ra A$ est un isomorphisme, on peut donc supposer $S$ strictement hensélien.

Le $S$-schéma en groupes $Q'$ est séparé et lisse, de fibre générique $A_K$. De plus, le morphisme 
$$Q'(S)\ra Q'(K)$$ 
est bijectif d'après le lemme \ref{extension}. Pour démontrer le théorème, il reste à vérifier que $Q'$ est de type fini sur $S$ (cf. \cite{BLR} 7.1/1). Pour cela, la méthode est la même que dans \cite{BLR} 9.5/7, à ceci près qu'il faut travailler avec $Q'$ au lieu de $Q$. Comme dans \emph{loc. cit.}, il suffit de montrer que le morphisme canonique $Q'\ra A$ induit un isomorphisme sur les composantes neutres.

Pour cela, le point clé à démontrer est que le groupe abstrait des composantes connexes $\Phi_{Q'}:=Q'(k)/(Q')^0(k)$ de $(Q')_k$ est de type fini. Le même raisonnement que celui de \emph{loc. cit.} permet alors de conclure que $(Q')^0\ra A^0$ est bien un isomorphisme.

Dans la suite de la démonstration, si $G$ est un $S$-foncteur en groupes dont la fibre spéciale $G_k$ est représentable par un $k$-schéma localement de type fini, on note $\Phi_G$ le groupe abstrait des composantes connexes de $G_k$, à savoir
$$\Phi_{G}:=G_{k}(\bar{k})/G_{k}^{0}(\bar{k})$$
où $\bar{k}$ est une clôture algébrique de $k$. On a 
$$\Phi_G=G_{k}(k)/G_{k}^{0}(k)$$
lorsque $G_k$ est lisse sur le corps séparablement clos $k$.

Avec ces notations, la lissification $Q'\ra Q$ induit un morphisme $\Phi_{Q'}\ra\Phi_Q$. Si $\Phi_{Q}'$ désigne son image et $N$ son noyau, on a une suite exacte de groupes abstraits
$$0\ra N\ra\Phi_{Q'}\ra\Phi_{Q}'\ra 0.$$ Comme la lissificaion est un morphisme de type fini, le groupe $N$ est fini. On est donc ramené à montrer que $\Phi_Q$ est de type fini.

La fibre spéciale $P_k$ de $P$ est représentable par un sous-$k$-schéma en groupes fermé de $\Pic_{X_k/k}$ (à savoir $q_{k}^{-1}(Q_k)$). Le groupe $\Phi_Q$ est quotient du groupe $\Phi_P$. Notant $\Phi_{\Pic_{X/S}}'$ l'image de $\Phi_P$ dans le groupe $\Phi_{\Pic_{X/S}}$, on a une extension
$$0\ra M\ra\Phi_{P}\ra\Phi_{\Pic_{X/S}}'\ra 0,$$
avec $M$ fini. Maintenant, le groupe de Néron-Séveri géométrique $\Phi_{\Pic_{X/S}}$ de $X_k$ est de type fini (\cite{SGA6} XIII 5.1). Par conséquent $\Phi_P$, et donc $\Phi_Q$, sont de type fini. Ceci achève la démonstration.
\end{proof}

On a réalisé le modèle de Néron $A$ de $A_K$ en séparant le foncteur $P$, puis en lissifiant le schéma en groupes obtenu. Le processus inverse, à savoir lissification \emph{puis} séparation, permet aussi de retrouver $A$. 

Plus précisément, la dilatation d'un sous-schéma fermé dans un $S$-schéma localement de type fini est une construction locale pour la topologie étale, et c'est un morphisme affine, comme il résulte directement des définitions (cf. \cite{BLR} 3.2). Les dilatations s'étendent donc à la catégorie des $S$-espaces algébriques localement de type fini. En particulier, le procédé de  lissification des groupes décrit dans \cite{BLR} page 174 s'étend à la catégorie des $S$-espaces algébriques en groupes localement de type fini à fibre générique lisse. (On notera que, dans l'énoncé \cite{BLR} 7.1/4, on peut remplacer l'hypothèse << de type fini >> par << localement de type fini >> : le défaut de lissité d'une $S^{\sh}$-section quelconque est égal à celui de la section neutre, ce qui permet, pour un $S$-groupe, de s'affranchir de l'hypothèse de quasi-compacité dans \emph{loc. cit.} 3.3/3.)

Ceci étant, notons comme précédemment $(P,Y)$ l'adhérence schématique de $r^{-1}(A_K)$ dans $(\Pic_{X/S},Y)$, où $r$ désigne l'épimorphisme canonique
$$(\Pic_{X/S},Y)\twoheadrightarrow\Pic_{X/S}.$$
C'est un $S$-espace algébrique en groupes localement de type fini, dont la fibre générique $r^{-1}(A_K)$ est lisse (\cite{R} 2.4.3 b)). Posons :
\vspace*{2mm}
\begin{itemize}
\item $(P,Y)'$ espace algébrique en groupes sur $S$, lissifié de $(P,Y)$
\item $I$ adhérence schématique de $r^{-1}(0_K)$ dans $(P,Y)'$.
\end{itemize}
\vspace*{2mm}
 
\begin{Th} \label{Néronbis}
Soit $S$ un trait de corps de fonctions $K$ et corps résiduel $k$ au point fermé. Soit $X_K$ un $K$-schéma propre, géométriquement normal et géométriquement connexe. Soit $X/S$ un modèle propre et plat de $X_K$, semi-factoriel sur un hensélisé strict de $S$. Supposons que $X$ possède une quasi-section étale, ou que $k$ soit parfait. Alors le modèle de Néron $A$ de $A_K:=\Pic_{X_K/K,\red}^0$ sur $S$ admet une présentation (pour la topologie $\fppf$) par des $S$-espaces algébriques en groupes
$$0\ra I\ra (P,Y)'\ra A \ra 0,$$
où $(P,Y)'\ra A$ est l'unique $S$-morphisme prolongeant $r_K:(P,Y)_K\ra A_K$.

Si $X$ est cohomologiquement plat sur $S$, le foncteur $\Pic_{X/S}$ est un espace algébrique (\cite{R} 5.2). Notant alors $F$ l'adhérence schématique de $0_K\in A_K$ dans le lissifié $P'$ de $P$, l'identité de $A_K$ induit la suite exacte
$$0\ra F\ra P'\ra A\ra 0.$$
\end{Th}
Les ingrédients de la démonstration sont essentiellement les mêmes que pour le théorème \ref{Néron}, mais la preuve en est indépendante.

\begin{proof}
On a vu en début de section que le $S$-schéma en groupes $Q$ admet une présentation par des $S$-espaces algébriques en groupes
$$0\ra H\ra (P,Y)\ra P/E=Q \ra 0.$$
Par continuité, la lissication $(P,Y)'\ra (P,Y)$ induit une flèche $I\ra H$. Notant $C$ le faisceau $\fppf$ quotient $(P,Y)'/I$, on obtient un morphisme de suite exactes
\begin{displaymath}
\xymatrix{
0\ar[r] & I\ar[r] \ar[d] & (P,Y)' \ar[r] \ar[d] & C \ar[r] \ar[d] & 0 \\
0\ar[r] & H \ar[r] & (P,Y) \ar[r] & Q \ar[r] & 0.
}
\end{displaymath}
Comme $I$ est plat sur $S$, le faisceau $C$ est encore un $S$-espace algébrique en groupes (cf. par exemple \cite{BLR} 8.4/9). Il est localement de type fini sur $S$ d'après l'analogue de \cite{SGA3} I, Exp. VI${}_B$, 9.2 (xii) pour les espaces algébriques en groupes. Comme il est séparé sur $S$ ($I$ est fermé dans $(P,Y)'$), c'est un $S$-schéma (cf. \cite{A} IV 4.B). Il est lisse sur $S$ puisqu'il est quotient de $(P,Y)'$ qui lisse sur $S$ par $I$ qui est plat sur $S$ (\cite{SGA3} \emph{loc. cit.}). En particulier, l'identité de $A_K$ se prolonge en un $S$-morphisme $C\ra A$. Pour montrer que c'est un isomorphisme, on peut supposer $S$ strictement hensélien. Il reste alors à voir que $C(S)\ra C(K)$ est surjectif et que $C$ est de type fini sur $S$.

Soit $a_K\in C(K)=A_K(K)=\Pic_{X/K}^0(K)$. D'après le lemme \ref{extension}, on peut représenter $a_K$ par un faisceau inversible $\cL_K$ sur $X_K$. Comme $X$ est semi-factoriel sur $S$, on peut prolonger $\cL_K$ en un faisceau inversible $\cL$ sur $X$. Par définition, le rigidificateur $Y$ est fini sur $S$. On peut donc rigidifier $\cL$ en $(\cL,\alpha)$, définissant ainsi une section $b\in(P,Y)(S)$. Celle-ci se relève en une section $b'$ du lissifié $(P,Y)'$, et par commutativité du diagramme ci-dessus, l'image $a\in C(S)$ de $b'$ a pour fibre générique $a_K$. D'où la surjectivité de $C(S)\ra C(K)$.

Montrons enfin que $C/S$ est de type fini. Comme dans \ref{Néron} (et dans \cite{BLR} 9.5/7), cela revient à montrer que le groupe ordinaire $\Phi_C$ des composantes connexes de $C_k$ est de type fini.

Le groupe $\Phi_C$ est quotient du groupe $\Phi_{(P,Y)'}$, et le morphisme de $k$-schémas 
$$\big((P,Y)'\big)_k\ra (P,Y)_k$$ 
est de type fini. On est donc ramené à voir que $\Phi_{(P,Y)}$ est de type fini. Mais le noyau de
$$(P,Y)_k\ra P_k$$
est connexe (\cite{R} 2.4.3 b)), si bien que $\Phi_{(P,Y)}=\Phi_P$. Ce dernier est de type fini parce que le groupe de Néron-Séveri géométrique $\Phi_{\Pic_{X/S}}$ l'est. Ceci achève la démonstration de la première partie du théorème.

Maintenant, lorsque $\Pic_{X/S}$ est un espace algébrique, il n'y a pas lieu de considérer le rigidificateur $Y$, et la seconde partie du théorème se démontre comme la première.
\end{proof}

\begin{Rem} \label{com}
\emph{Avec les notations des théorèmes \ref{Néron} et \ref{Néronbis}, la flèche composée}
$$(P,Y)'\ra (P,Y)\ra P \ra Q$$ \emph{se factorise de façon unique à travers la lissification $Q'\ra Q$.}
\emph{Le diagramme}
\begin{displaymath}
\xymatrix{
(P,Y)'\ar[rr] \ar[dr] & &  Q' \ar[dl]^{\sim} \\
 & A &
}
\end{displaymath}
\emph{est commutatif. En effet, les deux morphismes $(P,Y)'\ra A$ coincïdent avec $r_K$ sur les fibres génériques, l'espace $(P,Y)'$ est lisse sur $S$ (donc réduit) et $A/S$ est séparé.}
\end{Rem}

\subsection{Comparaison des composantes neutres}

Rappelons la définition suivante.

\begin{Def}(\cite{R} 6.1.11 3)) \label{d'}
Soit $S$ un trait de corps résiduel $k$ au point fermé. Soit $X$ est un schéma \emph{normal}, propre et plat sur $S$. On note $\xi_i$, $i=1,\ldots,r$, les points maximaux de la fibre spéciale $X_k$ de $X/S$, $d_i$ la longueur de l'anneau artinien $\cO_{X_k,\xi_i}$, et $X_i$ la composante irréductible de $X_k$ de point générique $\xi_i$ munie de sa structure réduite.

On définit un entier $d'$ par la condition suivante : c'est le plus grand entier divisant les $d_i$, $i=1,\ldots,r$, tel que le cycle $1$-codimensionnel sur $X$
$$(1/d')(X_k)=\sum_{i=1}^{r}(d_i/d')(X_i)$$
soit un diviseur.
\end{Def}

\begin{Prop} \label{neutre}
Dans la situation du théorème \ref{Néronbis}, la flèche $r_K$ se prolonge en un morphisme fidèlement plat localement de type fini
$$\big((P,Y)'\big)^0\twoheadrightarrow A^0.$$
Supposons $X$ cohomologiquement plat sur $S$. Si $X$ est normal et $d'=1$ (définition \ref{d'}), alors l'identité de $A_K$ se prolonge en un isomorphisme
\begin{displaymath}
\xymatrix{
(P')^0\ar[r]^{\sim} & A^0.
}
\end{displaymath}
\end{Prop}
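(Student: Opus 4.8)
The plan is to handle the two assertions separately: the first is essentially formal once one has Theorem \ref{Néronbis}, and the second reduces, after the first, to the vanishing of a single kernel.

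\emph{First assertion.} By Theorem \ref{Néronbis} the sequence $0\ra I\ra (P,Y)'\ra A\ra 0$ is exact for the fppf topology, so the morphism $\rho\colon (P,Y)'\ra A$ prolonging $r_K$ is faithfully plat et localement de présentation finie (it is an $I$-torsor fppf-localement, $I$ being $S$-plat comme adhérence schématique). Being a homomorphism of $S$-espaces en groupes, $\rho$ carries $\big((P,Y)'\big)^0$ into $A^0$, and the induced map $\rho^0$ is just the restriction of $\rho$ to open sub-$S$-espaces of source and target; hence it is flat and localement de type fini. For surjectivity I would argue fibre by fibre: on the generic fibre, $r_K$ is an épimorphisme à noyau torique (\cite{R} 2.4.3 b)), so its restriction to the neutral component is onto the connected group $A_K$; on the special fibre, $\rho_k$ is faithfully plat, donc surjectif, et, le noyau de tout homomorphisme de $k$-schémas en groupes de type fini étant un sous-groupe fermé, the image of $\big(((P,Y)')^0\big)_k$ is a closed connected subgroup of finite index in $A^0_k$, hence equal to $A^0_k$. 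As $S$ is a trait, $\rho^0$ is surjective, which proves the first assertion.

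\emph{Second assertion: reduction.} Now suppose $X$ cohomologiquement plat sur $S$, so that (Theorem \ref{Néronbis}) $\Pic_{X/S}$ is an algebraic space and $0\ra F\ra P'\ra A\ra 0$ is fppf-exact, $F$ being the schematic closure of $0_K$ in $P'$. As $F$ is irreducible (closure of a point) and contains the section unité, which lies in $(P')^0$, one has $F\subseteq (P')^0$; thus $F$ is exactly the kernel of $(P')^0\ra A^0$. Running the argument of the first assertion with $P'$ in place of $(P,Y)'$ — legitimate since no rigidificateur is now needed — this morphism is faithfully plat et localement de présentation finie. Since a faithfully flat monomorphism which is localement de présentation finie is an isomorphism, the whole second assertion reduces to one point: $F=S$, i.e. the section unité of $P'$ is une immersion fermée. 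Note that $F$ is $S$-plat with $F_K=\Spec K$ and $0$-dimensional fibres, so this is a statement about the triviality of a quasi-fini flat $S$-group scheme à fibre générique triviale.

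\emph{Second assertion: the invariant $d'$, and the main obstacle.} To get $F=S$ I would go back to the schematic closure $E$ of the section unité inside $\Pic_{X/S}$ (so $E\subseteq P=\overline{A_K}$, since $0_K\in A_K$) and invoke the structural study of $E$ for a \emph{normal} $X$ carried out in \cite{R} \S 6: $E$ is an affine $S$-plat group scheme, $E_k$ is a sous-groupe of $\Pic_{X_k/k}$ built from the diviseurs de Cartier verticaux of $X$, and the part of $E$ meeting the neutral component $P^0$ is governed by precisely the invariant of Definition \ref{d'} — a nontrivial such part would come from a $\cO_X\big({-}\sum(d_i/d)X_i\big)$ with $\sum(d_i/d)X_i$ de Cartier for some facteur commun $d>1$ des $d_i$, which is excluded exactly when $d'=1$. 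Granting this, the closure of $0_K$ in $P^0$ is the section unité; passing to the lissifié — using that les dilatations sont affines, donc séparées, so $(P^0)'$ is separated and the section unité stays fermée — gives $F=S$, and with the reduction above, $(P')^0\xrightarrow{\ \sim\ }A^0$. The hard part will be this last step: extracting from \cite{R} the precise statement controlling $E\cap P^0$ by $d'$, and checking carefully that the lissification $P'\ra P$ (which modifies the special fibre) neither enlarges the closure of $0_K$ nor reintroduces a nontrivial kernel — i.e. that passing to neutral components and passing to the lissifié commute in the way the argument requires.
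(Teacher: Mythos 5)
Your first assertion is essentially the paper's argument (exactness of $0\ra I\ra (P,Y)'\ra A\ra 0$, hence $\rho$ faithfully flat, then restriction to neutral components), but your surjectivity step on the special fibre is not justified as written: you assert that the image of $\big((P,Y)'_k\big)^0$ is a closed subgroup \emph{of finite index} in $A_k^0$, yet $(P,Y)'$ is only locally of finite type and its component group $\Phi_{(P,Y)'}$ is of finite type, not finite, so no finite-index claim is available. The correct (and simpler) point is that $\rho^0$ is flat and locally of finite type, hence open, so the image of $\big((P,Y)'_k\big)^0$ is an open subgroup of the connected group $A_k^0$, hence all of it. This is fixable.

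The second assertion contains a genuine gap. Your claim that $F\subseteq (P')^0$ because $F$ is irreducible and contains the unit section is a non sequitur: an irreducible space meeting an open subset need not be contained in it, and in fact $F\not\subseteq (P')^0$ in general, since $F_k$ typically contains classes of vertical divisors lying in non-neutral components of $P'_k$. Consequently your reduction of the statement to ``$F=S$'' aims at something which is generally \emph{false} under the hypotheses: already for a regular model of a curve whose special fibre is reducible with a component of multiplicity one, one has $X$ normal and $d'=1$, yet $P'\ra A$ is not an isomorphism (its kernel $F$ accounts for the vertical divisor classes), so $F\neq S$. The kernel of $(P')^0\ra A^0$ is only the open subspace $F\cap (P')^0$ of $F$, and it is \emph{this} that must be shown trivial. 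The paper's route is: reduce to $S$ strictly henselian; use that $F$ is étale over $S$ (\cite{R} 3.3.5), so the open $F\cap (P')^0$ is detected by its $S$-points; observe $(F\cap (P')^0)(S)\subseteq (E\cap P^0)(S)$ via the lissification $P'\ra P$; and conclude by $(E\cap P^0)(S)=0$, which is exactly what \cite{R} 6.4.1 3) gives under ``$X$ normal and $d'=1$''. Your final paragraph correctly senses that the interaction between lissification and the closure of $0_K$ is the delicate point, but with the kernel misidentified as all of $F$ the plan as written cannot be completed; the missing ingredients are the étaleness of $F$ over $S$ and the passage through $S$-points over a strictly henselian base, rather than any statement of the form $F=S$.
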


\begin{proof}
Comme $I/S$ est fidèlement plat localement de type fini, l'épimorphisme  
$$(P,Y)'\twoheadrightarrow A$$
l'est aussi. La fibre générique $r_{K}^{-1}(A_K)$ de $(P,Y)'$ est connexe (puisque $A_K$ et $\Ker(r_K)$ le sont, cf \cite{R} 2.4.3 b)). L'inclusion $\big((P,Y)'\big)^0\subseteq (P,Y)'$ est donc ouverte, et par conséquent le morphisme induit sur les composantes neutres
$$\big((P,Y)'\big)^0\ra A^0$$
est localement de type fini, plat et surjectif sur les fibres génériques. En particulier, l'image du $k$-morphisme
$$\big((P,Y)_{k}'\big)^0\ra A_{k}^0$$
est ouverte, et donc égale au groupe $A_{k}^0$ tout entier. Par conséquent, le morphisme $\big((P,Y)'\big)^0\ra A^0$ est surjectif.

Lorsqu'on suppose de plus $X$ cohomologiquement plat sur $S$, le résultat ci-dessus s'applique au morphisme $(P')^0\twoheadrightarrow A^0.$ Pour voir que c'est un isomorphisme si $X$ est normal et << $d'=1$ >>, on peut supposer $S$ strictement hensélien. L'adhérence schématique $F$ de la section unité de $A_{K}$ dans le $S$-espace algébrique localement séparé $P'$ est étale sur $S$ (\cite{R} 3.3.5), et le noyau de la flèche précédente est l'ouvert $F\cap (P')^0$ de $F$. Mais on a $(F\cap (P')^0)(S)\subseteq (E\cap P^0)(S)$, et les hypothèses entraînent que $(E\cap P^0)(S)=0$ (\cite{R} 6.4.1 3)). D'où l'isomorphisme annoncé.
\end{proof}
Pour déduire de la proposition précedente un énoncé de surjectivité au niveau des \emph{sections}, on va se placer dans le cas où $\cO(S)$ est complet à corps résiduel algébriquement clos. C'est alors un argument utilisant le \emph{foncteur de Greenberg} d'un espace algébrique en groupes commutatifs lisse sur $S$ qui permet de conclure. Cela n'apparaîtra pas explicitement ici, mais intervient crucialement dans la démonstration de \cite{BLR} 9.6/2 que nous allons utiliser.

\begin{Prop} \label{surj}
On conserve les notations du théorème \ref{Néronbis}, et on suppose de plus \emph{$\cO(S)$ complet et $k$ algébriquement clos.} Le morphisme canonique
$$\big((P,Y)'\big)^0(S)\ra A^0(S)$$ est surjectif.
\end{Prop}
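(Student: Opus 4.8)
The plan is to reduce the statement to the surjectivity of a map of Greenberg functors and then invoke \cite{BLR} 9.6/2. First I would recall the setup from Proposition \ref{neutre}: we have a fidèlement plat, localement de type fini epimorphism of $S$-group algebraic spaces $\big((P,Y)'\big)^0\twoheadrightarrow A^0$, whose kernel is the open subspace $I\cap\big((P,Y)'\big)^0$ of $I$, where $I$ is the schematic closure of $r^{-1}(0_K)$ in $(P,Y)'$. Since $\cO(S)$ is now complete with $k$ algebraically closed, $A$ is the Néron model of the abelian variety $A_K$ over a complete strictly henselian base, so all the hypotheses of the Greenberg-functor machinery are available. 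I would denote by $I^0$ the open part $I\cap\big((P,Y)'\big)^0$ and write the short exact sequence of fppf sheaves $0\ra I^0\ra\big((P,Y)'\big)^0\ra A^0\ra 0$.

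Next I would take the long exact cohomology sequence for the fppf topology on $S$, which gives exactness of
\begin{displaymath}
\big((P,Y)'\big)^0(S)\ra A^0(S)\ra H^1_{\fppf}(S,I^0).
\end{displaymath}
So it suffices to show that the image of $A^0(S)$ in $H^1_{\fppf}(S,I^0)$ is zero, or more simply that $H^1_{\fppf}(S,I^0)$ itself vanishes, or that every $I^0$-torsor pulled back from a point of $A^0(S)$ is trivial. Here is where I would use that $S$ is the spectrum of a complete discrete valuation ring with algebraically closed residue field: such a base is of cohomological dimension $\leq 1$ in a strong sense, and smooth group schemes satisfy $H^1=0$ by Lang's theorem on the special fibre together with a Hensel/Greenberg approximation. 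The subtlety is that $I^0$ need not be smooth — it is only $S$-flat, localement de type fini, and separated, being an open subspace of the étale-locally-finite-type space $I$ (which by \cite{R} 3.3.5 is étale over $S$ when $X$ is cohomologically flat, but in general is just a flat separated algebraic space). So I cannot directly cite Lang; instead I would follow the argument of \cite{BLR} 9.6/2, which handles exactly this kind of situation by a dévissage using the Greenberg functor.

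Concretely, the strategy of \emph{loc. cit.} is: realize $\big((P,Y)'\big)^0$ and $A^0$ as smooth $S$-group algebraic spaces, pass to their Greenberg functors $\mathrm{Gr}(\,\cdot\,)$ over the residue field $k$, which are smooth $k$-group schemes (or pro-$k$-schemes), and observe that the induced map $\mathrm{Gr}\big((P,Y)'\big)^0\ra\mathrm{Gr}(A^0)$ is a smooth surjection of group schemes over the algebraically closed field $k$; hence it is surjective on $k$-points by a standard smoothness argument, and $\mathrm{Gr}(G)(k)=G(\cO(S))$ for smooth $G$. The point where one must be careful — and the main obstacle — is justifying that the Greenberg functor applies and behaves well for \emph{algebraic spaces} rather than schemes, and that the kernel $I^0$, though possibly non-smooth, does not obstruct the argument: one handles this by noting that $\big((P,Y)'\big)^0\ra A^0$ being smooth surjective (it is fidèlement plat localement de type fini between smooth $S$-spaces, hence smooth) is all that is needed, the non-smoothness of the kernel being irrelevant once one works directly with the smooth total space and target. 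I would therefore conclude: the map $\big((P,Y)'\big)^0(\cO(S))\ra A^0(\cO(S))$ is identified with $\mathrm{Gr}\big((P,Y)'\big)^0(k)\ra\mathrm{Gr}(A^0)(k)$, a surjection of $k$-points of a smooth surjective morphism of group schemes over the algebraically closed field $k$, whence the claim. The only real work is checking the hypotheses of \cite{BLR} 9.6/2 (or its evident extension to algebraic spaces, using that dilatations and the Greenberg functor extend to localement de type fini algebraic spaces, as already noted in the text before Theorem \ref{Néronbis}), and verifying that $A^0$ here is genuinely of finite type — which it is, being the identity component of a Néron model of an abelian variety.
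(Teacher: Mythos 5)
Il y a une lacune réelle au point crucial de votre argument : vous affirmez que $\big((P,Y)'\big)^0\ra A^0$ est lisse parce qu'il est \og fidèlement plat localement de type fini entre $S$-espaces lisses, donc lisse \fg. Cette implication est fausse : un morphisme plat de type fini entre schémas en groupes lisses sur $S$ n'est pas lisse en général (pensez à la multiplication par $p$ sur $\bbG_{m,S}$ en caractéristique résiduelle $p$, ou à une isogénie purement inséparable). Ici les fibres de $\big((P,Y)'\big)^0\ra A^0$ sont des torseurs sous $I\cap\big((P,Y)'\big)^0$, qui n'est que \emph{plat} sur $S$ (adhérence schématique de $\Ker(r_K)$) et dont la fibre spéciale peut être non lisse ; le morphisme n'a donc aucune raison d'être lisse, et la non-lissité du noyau n'est pas \og irrelevante \fg : c'est précisément elle qui interdit de relever les $S$-points par le lemme de Hensel. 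Votre variante cohomologique échoue pour la même raison : pour un $S$-groupe plat non lisse, $H^1_{\fppf}(S,\cdot)$ ne s'annule pas en général sur un trait complet à corps résiduel algébriquement clos (exemple : $\mu_p$, avec $H^1_{\fppf}(S,\mu_p)=\cO(S)^{*}/(\cO(S)^{*})^{p}\neq 0$). Ainsi, l'obstruction dans $H^1_{\fppf}(S,I^0)$ que vous introduisez n'est pas tuée par votre argument.

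Le chemin correct, qui est celui du texte, consiste à vérifier les deux hypothèses effectives de \cite{BLR} 9.6/2 pour le morphisme $(P,Y)'\ra A$ : (i) la surjectivité de $(P,Y)'(S)\ra A(S)$, et (ii) le fait que $(P,Y)'(S)/\big((P,Y)'\big)^0(S)$ est de type fini. Le point (i) est le c\oe ur de l'affaire et utilise la \emph{semi-factorialité} de $X$ : un point $a_K\in A_K(K)$ se représente par un faisceau inversible sur $X_K$ (lemme \ref{extension}), que l'on prolonge sur $X$, que l'on rigidifie le long de $Y$, puis que l'on relève au lissifié $(P,Y)'$ ; la séparation de $A$ garantit que la section obtenue relève $a$ et pas seulement $a_K$. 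Le point (ii) s'obtient en identifiant le quotient à $\Phi_{(P,Y)'}$ (lissité de $(P,Y)'$ et connexité de sa fibre générique $r_K^{-1}(A_K)$), dont la type-finitude a été établie dans la preuve du théorème \ref{Néronbis} via la finitude du groupe de Néron--Séveri géométrique. Le foncteur de Greenberg n'intervient qu'à l'intérieur de la démonstration de \cite{BLR} 9.6/2 ; il n'y a pas à faire soi-même de dévissage, mais il faut impérativement fournir (i) et (ii), que votre proposition ne vérifie pas — en particulier, votre argument n'utiliserait jamais la semi-factorialité de $X$, ce qui est le signe qu'il ne peut pas être correct tel quel.
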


\begin{proof}
D'après \cite{BLR} 9.6/2, il suffit de vérifier que le groupe abstrait $(P,Y)'(S)/\big((P,Y)'\big)^0(S)$ est de type fini et que le morphisme $(P,Y)'(S)\ra A(S)$ est surjectif.

L'espace algébrique $(P,Y)'$ est lisse sur $S$, et sa fibre générique $r_{K}^{-1}(A_K)$ est connexe puisque $A_K$ et $\Ker(r_K)$ le sont (\cite{R} 2.4.3 b)). Le morphisme de groupes
$(P,Y)'(S)/\big((P,Y)'\big)^0(S)\ra \Phi_{(P,Y)'}$
est donc bijectif. Et on a déjà vu au cours de la démonstration du théorème \ref{Néronbis} que ce groupe est de type fini.

Passons à la surjectivité de $(P,Y)'(S)\ra A(S)$. Soit $a\in A(S)$. Notons $a_K\in A_K(K)$ sa fibre générique. On a vu au cours de la démonstration du théorème \ref{Néronbis} que l'on peut trouver $b'\in (P,Y)'(S)$ dont l'image dans $A(S)$ prolonge $a_K$. Comme $A/S$ est séparé, l'image de $b'$ est $a$. 
\end{proof}

\begin{Cor} \label{se}
Dans la situation de la proposition \ref{surj}, on a un isomorphisme
\begin{displaymath}
\xymatrix{
\big((P,Y)'\big)(S)\Big\slash\Big(\big((P,Y)'\big)^0(S)+I(S)\Big)\ar[r]^>>>>>{\sim} & A(S)/A^0(S)
}
\end{displaymath}
c'est-à-dire, en termes des groupes des composantes connexes des fibres spéciales, une suite exacte
\begin{displaymath}
\xymatrix{
0\ar[r] & I(S)\Big\slash\Big(I(S)\cap\big((P,Y)'\big)^0(S)\Big)\ar[r] & \Phi_{(P,Y)'}\ar[r]& \Phi_A \ar[r] & 0.
}
\end{displaymath}
\end{Cor}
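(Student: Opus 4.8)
Le plan est de déduire l'énoncé de la suite exacte de $S$-espaces algébriques en groupes
$$0\ra I\ra (P,Y)'\ra A\ra 0$$
fournie par le théorème \ref{Néronbis}, en passant aux $S$-points ; tout le contenu substantiel a déjà été obtenu dans la proposition \ref{surj}, et il ne restera que de la théorie des groupes abéliens. Je commencerais par appliquer le foncteur $(-)(S)$ à cette suite : l'exactitude à gauche est formelle, et la surjectivité de $(P,Y)'(S)\ra A(S)$ a déjà été établie au cours de la démonstration de la proposition \ref{surj} (on y relève un point de $A(S)$ en un faisceau inversible sur $X_K$ grâce au lemme \ref{extension}, on le prolonge à $X$ par semi-factorialité, on le rigidifie le long du rigidificateur fini $Y$, puis on relève au lissifié). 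On obtient ainsi une suite exacte courte de groupes abéliens $0\ra I(S)\ra (P,Y)'(S)\ra A(S)\ra 0$.

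Ensuite, j'observerais que, la flèche $(P,Y)'\ra A$ de cette suite exacte étant un homomorphisme de groupes, elle envoie le sous-groupe ouvert connexe $\big((P,Y)'\big)^0$ dans $A^0$ ; de plus, la proposition \ref{surj} affirme que $\big((P,Y)'\big)^0(S)\ra A^0(S)$ est surjective, de noyau $I(S)\cap\big((P,Y)'\big)^0(S)$ (intersection prise dans $(P,Y)'(S)$). On dispose alors d'un morphisme de suites exactes courtes de groupes abéliens, la sous-suite étant formée des composantes neutres et les flèches verticales étant les inclusions, licites puisque $\big((P,Y)'\big)^0$ et $A^0$ sont des ouverts. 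Comme $\big((P,Y)'\big)^0(S)$ se surjecte sur $A^0(S)$, l'image réciproque de $A^0(S)$ par $(P,Y)'(S)\ra A(S)$ est exactement le sous-groupe $\big((P,Y)'\big)^0(S)+I(S)$, d'où directement l'isomorphisme
$$(P,Y)'(S)\Big\slash\Big(\big((P,Y)'\big)^0(S)+I(S)\Big)\ \xrightarrow{\ \sim\ }\ A(S)/A^0(S),$$
tandis que le lemme du serpent appliqué au morphisme de suites exactes ci-dessus (les noyaux verticaux sont nuls) donne la suite exacte courte
$$0\ra I(S)\Big\slash\Big(I(S)\cap\big((P,Y)'\big)^0(S)\Big)\ra (P,Y)'(S)\big/\big((P,Y)'\big)^0(S)\ra A(S)/A^0(S)\ra 0.$$

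Il restera enfin à identifier les deux derniers termes avec $\Phi_{(P,Y)'}$ et $\Phi_A$. Comme $\cO(S)$ est complet à corps résiduel algébriquement clos, $S$ est strictement hensélien ; pour tout $S$-espace algébrique en groupes $G$ lisse sur $S$ dont la fibre spéciale est localement de type fini, $G^0$ est un ouvert de fibre spéciale $G_k^0$ et la flèche naturelle $G(S)/G^0(S)\ra\Phi_G$ est bijective (c'est l'argument déjà employé dans la démonstration de la proposition \ref{surj} : une section dont la spécialisation tombe dans $G_k^0$ se factorise par $G^0$ puisque $S$ est local, et $G(S)\ra G_k(k)$ est surjectif par lissité). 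En l'appliquant à $G=(P,Y)'$ puis à $G=A$, on obtient la forme annoncée. Je ne vois pas ici d'obstacle véritable : l'étape décisive, la surjectivité sur les $S$-points, relève déjà de la proposition \ref{surj} (via le foncteur de Greenberg et \cite{BLR} 9.6/2) ; le seul point demandant un peu d'attention est d'utiliser la surjectivité de $\big((P,Y)'\big)^0(S)\ra A^0(S)$ — et non la seule surjectivité de $(P,Y)'(S)\ra A(S)$ — pour identifier $\big((P,Y)'\big)^0(S)+I(S)$ à l'image réciproque de $A^0(S)$.
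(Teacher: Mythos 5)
Votre démonstration est correcte et suit essentiellement la même démarche que celle (implicite) du texte : on prend les $S$-points de la suite $0\ra I\ra (P,Y)'\ra A\ra 0$, on utilise la surjectivité de $(P,Y)'(S)\ra A(S)$ et celle de $\big((P,Y)'\big)^0(S)\ra A^0(S)$ établies dans la proposition \ref{surj} pour identifier l'image réciproque de $A^0(S)$ à $\big((P,Y)'\big)^0(S)+I(S)$, puis on identifie les quotients aux groupes de composantes via la lissité sur la base strictement hensélienne, comme dans la preuve de la proposition \ref{surj}. Rien à redire.
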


\begin{Rem}
\emph{Lorsque de plus $X/S$ est cohomologiquement plat, i.e. $\Pic_{X/S}$ est un espace algébrique, le corollaire \ref{se} s'écrit}
\begin{displaymath}
\xymatrix{
P'(S)\big\slash\big((P')^0(S)+F(S)\big)\ar[r]^>>>>>{\sim} & \Phi_A.
}
\end{displaymath}
\emph{(avec les notations du théorème \ref{Néronbis}). La bijection $P'(S)\simeq P(S)$ induit une bijection $F(S)\simeq E(S)$. Le groupe $\Phi_A$ s'insère donc dans une suite exacte courte}
\begin{displaymath}
0 \ra P^0(S)\Big\slash\Big(P^0(S)\cap\big((P')^0(S)+F(S)\big)\Big) \ra \Phi_A \\
  \ra P(S)\big\slash\big(P^0(S)+E(S)\big)\ra 0.
\end{displaymath}
\emph{Si $X$ est normal et  $d'=1$ (définition \ref{d'}), le noyau se simplifie en $P^0(S)/(P')^0(S)$ (puisqu'alors $E(S)\cap P^0(S)=0$ d'après \cite{R} 6.4.1 3)).}
\end{Rem}

Terminons par un corollaire de la proposition \ref{surj} concernant l'équivalence algébrique sur $X$.

\begin{Def} \label{defalg} \emph{(cf. \cite{SGA6} XIII 4.1 et 4.4)}
Soit $Z\ra T$ un morphisme propre de schémas. Soit $\cL$ un $\cO_{Z}$-module inversible. On dit que $\cL$ est \emph{algébriquement équivalent à zéro sur $Z$ (relativement à $T$)} si son image dans $\Pic_{Z/T}(T)$ appartient au sous-groupe $\Pic_{Z/T}^0(T)$. On notera $\Pic^0(Z/T)$ (ou simplement $\Pic^0(Z)$ s'il n'y a pas d'ambiguïté) le sous groupe du groupe de Picard de $Z$ formé des classes de $\cO_Z$-modules inversibles algébriquement équivalents à zéro sur $Z/T$.

Par définition de $\Pic_{Z/T}^0$, le fait pour un $\cO_Z$-module inversible d'être algébriquement équivalent à zéro relativement à $T$ peut se vérifier sur les fibres de $Z/T$.
\end{Def}

\begin{Cor} \label{n}
Soit $S$ le spectre d'un anneau de valuation discrète complet, à corps résiduel algébriquement clos $k$ et corps des fractions $K$. Soit $X_K$ un $K$-schéma propre, géométriquement normal et géométriquement connexe. Soit $X/S$ un modèle de $X_K$, propre, plat et semi-factoriel sur $S$. Soient $A/S$ le modèle de Néron de la variété de Picard de $X_K$ et $n$ l'exposant du groupe des composantes connexes de $A_k$. 

Pour tout $\cO_{X_K}$-module inversible $\cL_K$ algébriquement équivalent à zéro, il existe un $\cO_{X}$-module inversible $\cL'$ algébriquement équivalent à zéro prolongeant $\cL_{K}^{\otimes n}$. Autrement dit, le conoyau de la flèche de restriction
$$\Pic^0(X/S)\ra\Pic^0(X_K)$$
est tué par $n$.
\end{Cor}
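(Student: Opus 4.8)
Le plan est de déduire le corollaire de la proposition \ref{surj} et du lemme \ref{extension}. Remarquons d'abord que, $\cO(S)$ étant complet à corps résiduel algébriquement clos, le trait $S$ est strictement hensélien : << semi-factoriel sur $S$ >> équivaut donc ici à << semi-factoriel sur un hensélisé strict de $S$ >>, et comme $k$ est en outre parfait, toutes les hypothèses du théorème \ref{Néronbis}, et par suite de la proposition \ref{surj}, sont en vigueur. On gardera les notations $P$, $E$, $(P,Y)$, $(P,Y)'$ de ce théorème, ainsi que le morphisme d'oubli $r$.

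Première étape : traduire $\cL_K$ en un point de $A_K$, puis le spécialiser. D'après le lemme \ref{extension} on a $\Pic(X_K)=\Pic_{X_K/K}(K)$ ; comme un $K$-point d'un schéma se factorise par son réduit, on a $\Pic_{X_K/K}^0(K)=A_K(K)$, donc la classe $a_K$ de $\cL_K$ appartient à $A_K(K)$ et $na_K$ est la classe de $\cL_K^{\otimes n}$. Par la propriété de Néron, $a_K$ se prolonge en $a\in A(S)$. Le schéma $A$ étant lisse et $S$ strictement hensélien, la réduction fournit un isomorphisme $A(S)/A^0(S)\xrightarrow{\sim}\Phi_A$ : elle est surjective par lissité, et une section qui se spécialise dans $A_k^0(k)$ est dans $A^0(S)$ puisque $A^0$ est ouvert dans $A$. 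Comme $n$ est l'exposant de $\Phi_A$, on en tire $na\in A^0(S)$.

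Deuxième étape : remonter $na$ en un faisceau inversible sur $X$. On appliquera la proposition \ref{surj} : le morphisme $\big((P,Y)'\big)^0(S)\ra A^0(S)$ étant surjectif, on choisira $b'\in\big((P,Y)'\big)^0(S)$ d'image $na$. En poussant $b'$ par la lissification $(P,Y)'\ra(P,Y)$, on obtient $b\in(P,Y)(S)$ ; le rigidificateur $Y$ étant fini sur $S$, le foncteur de Picard rigidifié représente le foncteur naïf des faisceaux inversibles rigidifiés (\cite{R} 2.1), de sorte que $b$ provient d'un $\cO_X$-module inversible $\cL_0$ muni d'une rigidification le long de $Y$. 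On posera $\cL':=\cL_0$, et il restera à vérifier les deux propriétés voulues. Sur la fibre générique : la lissification est un isomorphisme au-dessus de $K$ et $(P,Y)'\ra A$ prolonge $r_K$ (théorème \ref{Néronbis}), donc l'image de $b'$ dans $A(S)$ étant $na$, on a $r_K(b'_K)=na_K$ ; par conséquent l'image de $b$ dans $P(S)\subseteq\Pic_{X/S}(S)$, qui n'est autre que la classe de $\cL_0$, a pour fibre générique $na_K$, et le lemme \ref{extension} entraîne $\cL_0|_{X_K}\simeq\cL_K^{\otimes n}$. Sur la fibre spéciale : $b'$ étant une section du sous-groupe ouvert $\big((P,Y)'\big)^0$, sa spécialisation tombe dans la composante neutre de la fibre spéciale, et ses images successives dans $(P,Y)_k$ puis dans le sous-groupe fermé $P_k\subseteq\Pic_{X_k/k}$ tombent encore dans les composantes neutres ; donc $[\cL_0|_{X_k}]\in P_k^0(k)\subseteq\Pic_{X_k/k}^0(k)$. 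Le critère fibre à fibre de la définition \ref{defalg} montre alors que $\cL'$ est algébriquement équivalent à zéro sur $X/S$, et il prolonge $\cL_K^{\otimes n}$ : c'est ce qu'il fallait.

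L'obstacle principal est la proposition \ref{surj} elle-même, dont la preuve s'appuie sur l'argument du foncteur de Greenberg derrière \cite{BLR} 9.6/2 ; une fois cet ingrédient disponible, tout le reste relève de la comptabilité : on suit une classe le long de $(P,Y)'\ra(P,Y)\ra P\hra\Pic_{X/S}$ et l'on contrôle séparément sa fibre générique et sa fibre spéciale. Il faudra néanmoins prendre garde aux identifications, standard pour des $S$-groupes (ou espaces algébriques en groupes) lisses sur une base strictement hensélienne, $\big((P,Y)'\big)^0_k=\big((P,Y)'_k\big)^0$ et $A(S)/A^0(S)=\Phi_A$ utilisées ci-dessus.
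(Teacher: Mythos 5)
Votre preuve est correcte et suit essentiellement la même démarche que celle du papier : passage à la classe $a_K\in A_K(K)$ via le lemme \ref{extension}, prolongement de $n\cdot a_K$ en une section de $A^0(S)$, relèvement par la proposition \ref{surj} dans $\big((P,Y)'\big)^0(S)$, puis interprétation comme faisceau inversible rigidifié sur $X$. Les seules variations sont cosmétiques (vous vérifiez l'équivalence algébrique à zéro fibre à fibre là où le papier utilise la factorisation $(P,Y)^0\ra P^0\subseteq\Pic_{X/S}^0$, et vous remplacez l'appel à la remarque \ref{com} par l'argument direct sur les fibres génériques, qui est d'ailleurs celui de sa démonstration).
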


\begin{proof}
Soit $a_K$ l'image de $\cL_K$ dans $\Pic_{X_K/K}^0(K)=A_K(K)$. Le point $n\cdot a_K$ s'étend en une section $a'\in A^0(S)$. D'après la proposition \ref{surj}, on peut relever $a'$ en $b'\in\big((P,Y)'\big)^0(S)$. Maintenant, le morphisme de lissification
$(P,Y)'\ra (P,Y)$ induit une inclusion
$$\big((P,Y)'\big)^0(S)\subseteq (P,Y)^0(S).$$
La section $b'$ correspond via cette inclusion à la classe d'un couple $(\cL',\alpha)$. Comme la flèche composée
$$(P,Y)^0\ra (P,Y) \ra P$$
se factorise par $P^0$, qui est un sous-foncteur de $\Pic_{X/S}^0$, le $\cO_{X}$-module inversible $\cL'$ est algébriquement équivalent à zéro.
 
D'autre part, le diagramme commutatif (cf. remarque \ref{com})
\begin{displaymath}
\xymatrix{
(P,Y)' \ar[rr] \ar[d] & & A \ar[d] \\
(P,Y) \ar[r] & P \ar[r] & Q
}
\end{displaymath}
montre que le $\cO_{X_K}$-module inversible $\cL'\otimes K$ représente $a_{K}'=n\cdot a_K$. Comme $$\Pic(X_K)\ra\Pic_{X_K/K}(K)$$ est injectif (et même bijectif, cf. lemme \ref{extension}), le faisceau $\cL'\otimes K$ est donc isomorphe à $\cL_{K}^{\otimes n}$.
\end{proof}

{\small

}
\end{document}